\newtheorem{theorem}{Theorem}[section]
\newtheorem{lemma}[theorem]{Lemma}
\newproof{proof}{Proof}
\newtheorem{proposition}[theorem]{Proposition}
\newtheorem{corollary}[theorem]{Corollary}
\newtheorem{example}[theorem]{Example}
\numberwithin{equation}{section}
\begin{document}

\begin{frontmatter}

\title{Subnormality for arbitrary powers of 2-variable \\
weighted shifts whose restrictions to a \\ 
large invariant subspace are tensor products}

\author{Ra\'{u}l E. Curto}
\address{Department of Mathematics, The University of Iowa, Iowa City, Iowa
52242}
\ead{raul-curto@uiowa.edu}
\ead[url]{http://www.math.uiowa.edu/\symbol{126}rcurto/}

\author{Sang Hoon Lee}
\address{Department of Mathematics, Chungnam National University, Daejeon,
305-764, Republic of Korea}
\ead{slee@cnu.ac.kr}
\author{Jasang Yoon}
\address{Department of Mathematics, The University of Texas-Pan American,
Edinburg, Texas 78539}
\ead{yoonj@utpa.edu}
\ead[url]{http://www.math.utpa.edu/\symbol{126}yoonj/}

\begin{abstract}
The Lifting Problem for Commuting Subnormals (LPCS) asks for necessary and
sufficient conditions for a pair of subnormal operators on Hilbert space to
admit commuting normal extensions. \ We study LPCS within the class of 
commuting $2$-variable weighted shifts $\mathbf{T} \equiv (T_1,T_2)$ with subnormal components
$T_1$ and $T_2$, acting on the Hilbert space $\ell ^2(\mathbb{Z}^2_+)$ with canonical orthonormal basis 
$\{e_{(k_1,k_2)}\}_{k_1,k_2 \geq 0}$ . \ The \textit{core} of
a commuting $2$-variable weighted shift $\mathbf{T}$, $c(\mathbf{T})$, is the restriction of $\mathbf{T}$ to the 
invariant subspace generated by all vectors $e_{(k_1,k_2)}$ with $k_1,k_2 \geq 1$; we say that 
$c(\mathbf{T})$ is of \textit{tensor form} if it is unitarily equivalent to a shift 
of the form $(I \otimes W_\alpha, W_\beta \otimes I)$, 
where $W_\alpha$ and $W_\beta$ are subnormal unilateral weighted shifts. \ Given a $2$-variable weighted 
shift $\mathbf{T}$ whose core is of tensor form, we prove
that LPCS is solvable for $\mathbf{T}$ if and only if
LPCS is solvable for any power $\mathbf{T}^{(m,n)}:=(T^m_1,T^n_2)$ ($m,n\geq 1$). \ 
\end{abstract}

\begin{keyword}
jointly hyponormal pairs, subnormal pairs, $2$-variable weighted
shifts, tensor form, core

\medskip

\textit{2000 Mathematics Subject Classification.} \ Primary 47B20, 
47B37, 47A13, 28A50; Secondary 44A60, 47-04, 47A20

\medskip

The first named author was partially supported by NSF Grants
DMS-0400741 and DMS-0801168. \ The second named author was 
partially supported by a National Research Foundation of 
Korea Grant funded by the Korean Government (2010-0028171). \ 
The third named author was partially supported by a Faculty Research
Council Grant at The University of Texas-Pan American.

\end{keyword}

\end{frontmatter}



\section{\label{Pre}Introduction}

The Lifting Problem for Commuting Subnormals (LPCS) asks for necessary and
sufficient conditions for a pair of subnormal operators on Hilbert space to
admit commuting normal extensions. \ In previous work (\cite{CLY1}, \cite%
{CLY2}, \cite{ROMP}, \cite{CLY4}, \cite{CuYo1}, \cite{CuYo2}, \cite{CuYo3}, %
\cite{Yo2}) we have studied LPCS from a number of different approaches. \
One such approach is to consider commuting pairs $\mathbf{T} \equiv (T_{1},T_{2})$
of subnormal operators and to ask to what extent the existence of liftings
for the powers $\mathbf{T}^{(m,n)}:=(T_{1}^{m},T_{2}^{n})\;(m,n\geq 1)$ can
guarantee a lifting for $\mathbf{T}$. \ For the class of $2$-variable
weighted shifts $\mathbf{T}$, it is often the case
that the powers of $\mathbf{T}$ are less complex than the initial pair; thus it becomes
especially significant to unravel how subnormality behaves under the action $%
(m,n)\mapsto \mathbf{T}^{(m,n)}\;\;(h,\ell \geq 1)$.

Within the class of $2$-variable weighted shifts, we consider the subclass $\mathcal{TC}$
consisting of pairs whose cores are of tensor form; that is

\begin{equation*}
\mathcal{TC}%
:=\{\mathbf{T} \in \mathfrak{H}_{0}:c(\mathbf{T}) \text{ is of tensor form}\}
\end{equation*}
This subclass has proved to be 
particularly attractive, since it is possible to
separate, within it, subnormality from $k$-hyponormality; thus, results
about LPCS for pairs in $\mathcal{TC}$ are especially useful. \ The class $\mathcal{TC}$ is small enough to allow for a simple
description of its pairs, yet large enough to be used as test ground for many significant hypotheses.

Before we proceed, we briefly pause to establish our
terminology. \ 
For $\alpha \equiv \{\alpha _{n}\}_{n=0}^{\infty }$ a bounded sequence of
positive real numbers (called \textit{weights}), let $W_{\alpha }:\ell ^{2}(%
\mathbb{Z}_{+})\rightarrow \ell ^{2}(\mathbb{Z}_{+})$ be the associated
unilateral weighted shift, defined by $W_{\alpha }e_{n}:=\alpha
_{n}e_{n+1}\;($all $n\geq 0)$, where $\{e_{n}\}_{n=0}^{\infty }$ is the
canonical orthonormal basis in $\ell ^{2}(\mathbb{Z}_{+}).$ \ 
Similarly, consider double-indexed positive bounded sequences $\alpha _{%
\mathbf{k}},\beta _{\mathbf{k}}\in \ell ^{\infty }(\mathbb{Z}_{+}^{2})$ , $%
\mathbf{k}\equiv (k_{1},k_{2})\in \mathbb{Z}_{+}^{2}$, and let $\{e_{\mathbf{k}}\}_{\mathbf{k} \in \mathbb{Z}^2_+}$ be the
canonical orthonormal basis in $\ell ^{2}(\mathbb{Z}^2_{+})$. \ We define the $2$%
-variable weighted shift $\mathbf{T}\equiv
(T_{1},T_{2})$ acting on $\ell ^{2}(\mathbb{Z}^2_{+})$ by 
\begin{equation*}
T_{1}e_{\mathbf{k}}:=\alpha _{\mathbf{k}}e_{\mathbf{k+}\varepsilon
_{1}}\quad \text{and}\quad T_{2}e_{\mathbf{k}}:=\beta _{\mathbf{k}}e_{%
\mathbf{k+}\varepsilon _{2}},
\end{equation*}%
where $\mathbf{\varepsilon }_{1}:=(1,0)$ and $\mathbf{\varepsilon }%
_{2}:=(0,1)$. \ The \textit{core} of
a commuting $2$-variable weighted shift $\mathbf{T}$ (in symbols, $c(\mathbf{T})$) is the restriction 
of $\mathbf{T}$ to the 
invariant subspace generated by all vectors $e_{(k_1,k_2)}$ with $k_1,k_2 \geq 1$; we say that 
$c(\mathbf{T})$ is of \textit{tensor form} if it is unitarily equivalent to a shift 
of the form $(I \otimes W_\alpha, W_\beta \otimes I)$, 
where $W_\alpha$ and $W_\beta$ are subnormal unilateral weighted shifts. \ Figure \ref{FigureROMP} shows both
the weight and Berger measure diagrams of a typical pair in $\mathcal{TC}$. \ As shown in 
\cite{ROMP}, each $\mathbf{T} \in \mathcal{TC}$ is completely determined by five parameters, i.e., the
$1$-variable measures $\sigma$, $\tau$, $\xi$ and $\eta$, and the positive number $a \equiv \alpha_{(0,1)}$. \
As we mentioned before, $\mathcal{TC}$ is of substantial interest to us, since it provides a fertile ground to 
test results on subnormality and $k$-hyponormality, and in particular about the solubility of LPCS.

\setlength{\unitlength}{.85mm} \psset{unit=.85mm} 
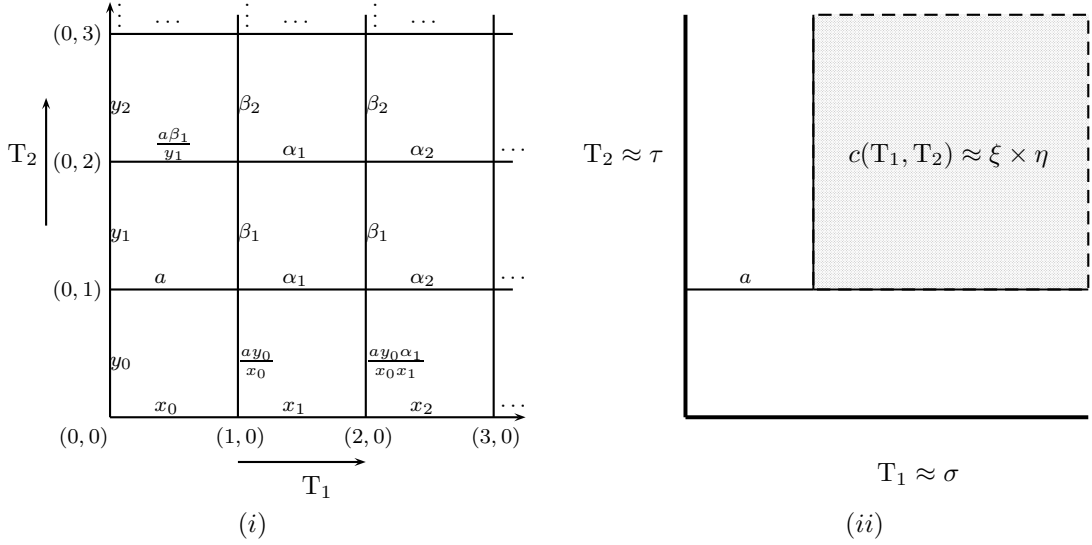
\begin{figure}[th]
\begin{center}
\begin{picture}(165,85)

\psline{->}(0,20)(65,20) 
\psline(0,40)(63,40)
\psline(0,60)(63,60) 
\psline(0,80)(63,80)
\psline{->}(0,20)(0,85) 
\psline(20,20)(20,83)
\psline(40,20)(40,83) 
\psline(60,20)(60,83)

\put(-8,16){\footnotesize{$(0,0)$}}
\put(16.5,16){\footnotesize{$(1,0)$}}
\put(36.5,16){\footnotesize{$(2,0)$}}
\put(56.5,16){\footnotesize{$(3,0)$}}

\put(7,21){\footnotesize{$x_{0}$}}
\put(27,21){\footnotesize{$x_{1}$}}
\put(47,21){\footnotesize{$x_{2}$}}
\put(61,21){\footnotesize{$\cdots$}}

\put(7,41){\footnotesize{$a$}}
\put(27,41){\footnotesize{$\alpha_{1}$}}
\put(47,41){\footnotesize{$\alpha_{2}$}}
\put(61,41){\footnotesize{$\cdots$}}

\put(7,62){\footnotesize{$\frac{a\beta_{1}}{y_{1}}$}}
\put(27,61){\footnotesize{$\alpha_{1}$}}
\put(47,61){\footnotesize{$\alpha_{2}$}}
\put(61,61){\footnotesize{$\cdots$}}

\put(7,81){\footnotesize{$\cdots$}}
\put(27,81){\footnotesize{$\cdots$}}
\put(47,81){\footnotesize{$\cdots$}}

\psline{->}(20,13)(40,13) \put(30,8){$\rm{T}_1$} 
\psline{->}(-10,50)(-10,70) \put(-16,60){$\rm{T}_2$}

\put(-9,39){\footnotesize{$(0,1)$}}
\put(-9,59){\footnotesize{$(0,2)$}}
\put(-9,79){\footnotesize{$(0,3)$}}

\put(0,28){\footnotesize{$y_{0}$}}
\put(0,48){\footnotesize{$y_{1}$}}
\put(0,68){\footnotesize{$y_{2}$}}
\put(1,81){\footnotesize{$\vdots$}}

\put(20,28){\footnotesize{$\frac{ay_{0}}{x_{0}}$}}
\put(20,48){\footnotesize{$\beta_{1}$}}
\put(20,68){\footnotesize{$\beta_{2}$}}
\put(21,81){\footnotesize{$\vdots$}}

\put(40,28){\footnotesize{$\frac{ay_{0}\alpha_{1}}{x_{0}x_{1}}$}}
\put(40,48){\footnotesize{$\beta_{1}$}}
\put(40,68){\footnotesize{$\beta_{2}$}}
\put(41,81){\footnotesize{$\vdots$}}

\put(20,2){$(i)$}
\put(115,2){$(ii)$}

\pspolygon[linestyle=dashed,fillcolor=lightgray,fillstyle=crosshatch*,hatchcolor=white,hatchwidth=0.2pt,hatchsep=0.5pt](110,40)(110,83)(153,83)(153,40)
\psline[linewidth=1.5pt](90,20)(153,20)
\psline[linewidth=1.5pt](90,20)(90,83)
\psline(90,40)(110,40)
\psline[linewidth=0.75pt](110,40)(153,40)
\psline[linewidth=0.75pt](110,40)(110,83)

\put(120,10){$\rm{T}_1 \approx \sigma$}
\put(74,60){$\rm{T}_2\approx \tau$}
\put(115.5,60){$c(\rm{T}_1,\rm{T}_2) \approx \xi \times \eta$}

\put(98.5,41){\footnotesize{$a$}}

\end{picture}
\end{center}
\caption{Weight and Berger measure diagrams of a typical $2$-variable weighted shift in 
$\mathcal{TC}$.}
\label{FigureROMP}
\end{figure}

Let us now
denote the class of commuting pairs of subnormal operators on Hilbert space
by $\mathfrak{H}_{0}$, the class of subnormal pairs by $\mathfrak{H}_{\infty
}$, and for an integer $k\geq 1$, the class of $k$-hyponormal pairs in $%
\mathfrak{H}_{0}$ by $\mathfrak{H}_{k}$. \ Clearly, $\mathfrak{H}_{\infty
}\subseteq \cdots \subseteq \mathfrak{H}_{k}\subseteq \cdots \subseteq 
\mathfrak{H}_{1}\subseteq \mathfrak{H}_{0}$; the main results in \cite{CuYo1}
and \cite{CLY1} show that these inclusions are all proper; moreover,
examples illustrating these proper inclusions can be found in $\mathcal{TC}$%
. \ 

In this paper we show that for $\mathbf{T} \in \mathcal{TC}$, the
subnormality of \textit{any} power $\mathbf{T}^{(m,n)}$
implies the subnormality of $\mathbf{T}$. \ To
accomplish this, we first show that every power of $\mathbf{T} \in \mathcal{TC}$ 
is the orthogonal direct sum of $2$-variable
weighted shifts in $\mathcal{TC}$. \ Since each $2$-variable weighted shift
in $\mathcal{TC}$ is completely determined by five parameters, we then study how the five parameters
of each direct summand in a power are related to the five parameters in the
initial $2$-variable weighted shift. \ Next, we recall from \cite{ROMP} that each $\mathbf{T}\in \mathcal{TC}$ is associated with a pair
of linear functionals $\varphi \equiv \varphi (\mathbf{T})$ and $\psi \equiv \psi (\mathbf{T})$ (each
depending on the five parameters), and that $\mathbf{T}$ is subnormal if and only if $\varphi \geq 0$ and $\psi \geq 0$. \ With
all of this at our disposal, we proceed to establish a connection between
the pair $(\varphi ,\psi )$ associated with $\mathbf{T}$ and those associated with the summands in the orthogonal direct sum
decomposition of $\mathbf{T}^{(m,n)}$. \ This
eventually leads to the proof of our main result (Theorem \ref{Main 1}). \ 

This result provides a complete generalization of Theorem 3.9 in \cite{CLY2}%
. \ At the time we wrote \cite{CLY2}, the techniques available to us allowed
us to deal only with the quadratic powers $\mathbf{T}^{(1,2)}$ and 
$\mathbf{T}^{(2,1)}$; with the aid of a number of additional
examples, together with the main result in \cite{ROMP}, we have now been
able to handle the case of arbitrary powers. \ 

As an application of Theorem \ref{Main 1}, we can exhibit a hyponormal $2$-variable weighted shift such that  
none of its powers is subnormal. \ We describe the shift in Example \ref{Main 2}. \ This provides a striking and concrete example of the big gap
that exists between hyponormality and subnormality for $2$-variable weighted
shifts, even within a relatively simple class like $\mathcal{TC}$.

\section{\label{Notation}Notation and preliminaries}

To describe our results in detail we need some notation; we further expand on our
terminology later in this section. \ For $\mathbf{k}=(k_{1},k_{2})\in 
\mathbb{Z}_{+}^{2}$, we shall let $\mathcal{M}_{i}$ (respectively $\mathcal{N%
}_{j})$ be the subspace of $\ell ^{2}(\mathbb{Z}_{+}^{2})$ which is spanned
by the canonical orthonormal basis vectors associated to indices $\mathbf{k}$
with $k_{1}\geq 0$ and $k_{2}\geq i$ (resp. $k_{1}\geq j$ and $k_{2}\geq 0$%
). $\ $The \textit{core} of a $2$-variable weighted shift $\mathbf{T}$ is the restriction of $\mathbf{T}$
to $\mathcal{M}_{1}\cap \mathcal{N}_{1}$; in symbols, $c(\mathbf{T}):=\mathbf{T}|_{\mathcal{M}_{1}\cap 
\mathcal{N}_{1}}$. \ A $2$-variable weighted shift $\mathbf{T}$ is said to be of \textit{tensor form} if $\mathbf{T}\cong (I\otimes W_{\xi },W_{\eta }\otimes I)$, where $W_{\xi }$ and 
$W_{\eta }$ are unilateral weighted shifts. \ The class of all $2$-variable
weighted shifts $\mathbf{T} \in \mathfrak{H}_{0}$ whose
cores are of tensor form is denoted by $\mathcal{TC}$; that is, 

\begin{equation*}
\mathcal{TC}%
:=\{\mathbf{T} \in \mathfrak{H}_{0}:c(\mathbf{T}) \text{ is of tensor form}\}
\end{equation*}
(see Figure \ref{FigureROMP}(i)). \ 

It is well known that the commutativity of a pair of subnormals is necessary
but not sufficient for the existence of a lifting (\cite{Abr}, \cite{Lu1}, %
\cite{Lu2}, \cite{Lu3}), and it has recently been shown that the joint
hyponormality of the pair is necessary but not sufficient \cite{CuYo1}.\ \
Our previous work (\cite{CuYo1}, \cite{CuYo2}, \cite{CuYo3}, \cite{CuYo5}, %
\cite{CLY1}, \cite{CLY2}, \cite{ROMP}, \cite{CLY4}, \cite{Yo1}, \cite{Yo2})
has revealed that the nontrivial aspects of the LPCS are best detected
within the class $\mathfrak{H}_{0}$, especially within $\mathcal{TC}$; we
thus focus our attention on this class.

For a single operator $T$, the subnormality of all powers $T^{n}\;(n\geq 2)$
does not imply the subnormality of $T$, even if $T$ is a unilateral weighted
shift \cite[pp. 378-379]{Sta}. \ Thus one might guess that if we were to
impose a further condition such as the subnormality of a restriction of $T$
to an invariant subspace, e.g., the subnormality of $T|_{\mathbf{\vee }%
\{e_{k}\in \ell ^{2}(\mathbb{Z}_{+}):k\geq i\}}$ (for some $i\geq 1$), that $%
T$ would then be subnormal. \ However, even if we assume this for $i=1$, the
subnormality of $T$ is not guaranteed. \ For example, let $T:=\operatorname{shift}(%
\frac{1}{3},\frac{1}{2},1,1,\cdots )$. \ Then $T|_{\mathbf{\vee }\{e_{k}\in
\ell ^{2}(\mathbb{Z}_{+}):k\geq 1\}}\equiv \operatorname{shift}(\frac{1}{2}%
,1,1,\cdots )$ is subnormal, and also all powers $T^{n}\;(n\geq 2)$ are
subnormal, but $T$ is not subnormal. \ As a mater of fact, no backward
extension $\operatorname{shift}(\alpha _{0},\frac{1}{2},1,1,\cdots )$ can be
subnormal (\cite[Corollary 6]{QHWS}). \ More generally, the necessary and
sufficient conditions for a unilateral weighted shift $W_{\alpha }$ to be
subnormal when we assume that $W_{\alpha }|_{\mathbf{\vee }\{e_{k}\in \ell
^{2}(\mathbb{Z}_{+}):k\geq 1\}}$ is subnormal (with Berger measure $\mu $)
were obtained in \cite[Proposition 8]{QHWS}: $W_{\alpha }$ is subnormal if
and only if $\frac{1}{t}\in L^{1}(\mu )$ and $\alpha _{0}^{2}\left\| \frac{1%
}{t}\right\| _{L^{1}(\mu )}\leq 1$. \ 

In the multivariable case, the analogous results are highly nontrivial, if
one further assumes that each component is subnormal. \ In $1$-variable, the
subspace $\mathbf{\vee }\{e_{k}\in \ell ^{2}(\mathbb{Z}_{+}):k\geq 1\}$ can
be regarded as ``the core of $T$''; as we move into two variables it is
therefore natural to consider the condition $\mathbf{T} \in 
\mathcal{TC}$. \ 

To prove our results, we require a number of tools and techniques introduced
in previous work, e.g., the Six-point Test (Lemma \ref{joint hypo}), the
Backward Extension Theorem for $2$-variable weighted shifts (Lemma \ref%
{backext}) and the formula to reconstruct the Berger measure of a unilateral
weighted shift (\ref{reconstruct}), together with a new direct sum
decomposition for powers of $2$-variable weighted shifts which parallels the
decomposition used in \cite{CuP} to analyze $k$-hyponormality for powers of (%
$1$-variable) weighted shifts. \ Concretely, to analyze the power $\mathbf{T}^{(m,n)}$ of a $2$-variable weighted shift $\mathbf{T}%
\equiv (T_{1},T_{2})$, we split the ambient space 
$\ell ^{2}(\mathbb{Z}_{+}^{2})$ as an orthogonal direct sum $\oplus
_{p=0}^{m-1}\oplus _{q=0}^{n-1}\mathcal{H}_{(p,q)}^{(m,n)}$, where for $%
p=0,1,\cdots ,m-1,$ and $q=0,1,\cdots ,n-1,$%
\begin{equation}
\mathcal{H}_{(p,q)}^{(m,n)}:=\vee \{e_{(m\ell +p,nk+q)}:k,\ell \geq 0\}.
\label{hpq}
\end{equation}%
Each of the subspaces $\mathcal{H}_{(p,q)}^{(m,n)}$ reduces $T_{1}^{m}$ and $%
T_{2}^{n}$, and $\mathbf{T}^{(m,n)}$ is subnormal if
and only if each summand $\mathbf{T}^{(m,n)}|_{%
\mathcal{H}_{(p,q)}^{(m,n)}}$ is subnormal. \ For a set of pairs $\mathcal{X}
$, let $\bigoplus \mathcal{X}$ denote the set of pairs that can be written
as orthogonal sums of pairs in $\mathcal{X}$. \ We will show in Section \ref%
{Structure} that $\bigoplus \mathcal{TC}$ is invariant under the action $%
(m,n)\mapsto \mathbf{T}^{(m,n)}\;\;(m,n \geq 1)$.
\ 

Briefly stated, our strategy to prove our main result (Theorem \ref{Main 1})
is as follows: (i) if $\mathbf{T} \in \mathcal{TC}$ then each power 
$\mathbf{T} ^{(m,n)}\in \bigoplus \mathcal{TC}$; (ii) without loss
of generality, we can always assume $m=1$; (iii) the pair $(\varphi ,\psi )$
associated with $\mathbf{T}$ is directly related to the pairs $%
(\varphi ^{(p,q)},\psi ^{(p,q)})$ associated to the direct summands in the
orthogonal decomposition of $\mathbf{T}^{(1,n)}$; (iv) if a power $\mathbf{T}^{(1,n)}$ is subnormal, the functionals $\varphi
^{(0,0)} $ and $\psi ^{(0,1)}$ are both positive; and (v) it then follows
that $\varphi $ and $\psi $ are both positive, and therefore $\mathbf{T}$ is subnormal.

We devote the rest of this section to establishing some additional
terminology and notation. \ Let $\mathcal{H}$ be a complex Hilbert space and
let $\mathcal{B}(\mathcal{H})$ denote the algebra of bounded linear
operators on $\mathcal{H}$. \ Recall that a bounded linear operator $T\in 
\mathcal{B}(\mathcal{H})$ is \textit{normal} if $T^{\ast }T=TT^{\ast },$ and 
\textit{subnormal} if $T=N|_{\mathcal{H}}$, where $N$ is normal and $N(%
\mathcal{H})$ $\mathcal{\subseteq H}$. \ An operator $T$ is said to be 
\textit{hyponormal} if $T^{\ast }T\geq TT^{\ast }$. \ For $S,T\in \mathcal{B}%
(\mathcal{H})$, let $[S,T]:=ST-TS$. An $n$-tuple $\mathbf{T:}=(T_{1},\cdots
,T_{n})$ of operators on $\mathcal{H}$ is said to be (jointly) \textit{%
hyponormal} if the operator matrix 
\begin{equation*}
\lbrack \mathbf{T}^{\ast },\mathbf{T]:=}\left( 
\begin{array}{llll}
\lbrack T_{1}^{\ast },T_{1}] & [T_{2}^{\ast },T_{1}] & \cdots & [T_{n}^{\ast
},T_{1}] \\ 
\lbrack T_{1}^{\ast },T_{2}] & [T_{2}^{\ast },T_{2}] & \cdots & [T_{n}^{\ast
},T_{2}] \\ 
\text{ \thinspace \thinspace \quad }\vdots & \text{ \thinspace \thinspace
\quad }\vdots & \ddots & \text{ \thinspace \thinspace \quad }\vdots \\ 
\lbrack T_{1}^{\ast },T_{n}] & [T_{2}^{\ast },T_{n}] & \cdots & [T_{n}^{\ast
},T_{n}]%
\end{array}%
\right)
\end{equation*}%
is positive semidefinite on the direct sum of $n$ copies of $\mathcal{H}$
(cf. \cite{Ath}, \cite{CMX}, \cite{CP1}, \cite{CP2}, \cite{McCP}). \ For
instance, if $n=2 $, 
\begin{equation*}
\lbrack \mathbf{T}^{\ast },\mathbf{T]:=}\left( 
\begin{array}{ll}
\lbrack T_{1}^{\ast },T_{1}] & [T_{2}^{\ast },T_{1}] \\ 
\lbrack T_{1}^{\ast },T_{2}] & [T_{2}^{\ast },T_{2}]%
\end{array}%
\right) \text{.}
\end{equation*}%
The $n$-tuple $\mathbf{T}\equiv (T_{1},T_{2},\cdots ,T_{n})$ is said to be 
\textit{normal} if $\mathbf{T}$ is commuting and each $T_{i}$ is normal, and 
$\mathbf{T}$ is \textit{subnormal} if $\mathbf{T}$ is the restriction of a
normal $n$-tuple to a common invariant subspace. \ In particular, a
commuting pair $\mathbf{T}\equiv (T_{1},T_{2})$ is said to be $k$\textit{%
-hyponormal }$(k\geq 1)$ \cite{CLY1} if 
\begin{equation*}
\mathbf{T}(k):=(T_{1},T_{2},T_{1}^{2},T_{2}T_{1},T_{2}^{2},\cdots
,T_{1}^{k},T_{2}T_{1}^{k-1},\cdots ,T_{2}^{k})
\end{equation*}%
is hyponormal, or equivalently 
\begin{equation*}
\lbrack \mathbf{T}(k)^{\ast },\mathbf{T}(k)]=([(T_{2}^{q}T_{1}^{p})^{\ast
},T_{2}^{m}T_{1}^{n}])_{_{1\leq p+q\leq k}^{1\leq n+m\leq k}}\geq 0.
\end{equation*}%
Clearly, normal $\Rightarrow $ subnormal $\Rightarrow $ $k$-hyponormal. \
For $\alpha \equiv \{\alpha _{n}\}_{n=0}^{\infty }$ a bounded sequence of
positive real numbers (called \textit{weights}), let $W_{\alpha }:\ell ^{2}(%
\mathbb{Z}_{+})\rightarrow \ell ^{2}(\mathbb{Z}_{+})$ be the associated
unilateral weighted shift, defined by $W_{\alpha }e_{n}:=\alpha
_{n}e_{n+1}\;($all $n\geq 0)$, where $\{e_{n}\}_{n=0}^{\infty }$ is the
canonical orthonormal basis in $\ell ^{2}(\mathbb{Z}_{+}).$ \ For a weighted
shift $W_{\alpha }$, \textit{the moments of }$\alpha $ are given as 
\begin{equation}
\gamma _{k}\equiv \gamma _{k}(\alpha ):=\left\{ 
\begin{array}{cc}
1, & \text{if }k=0 \\ 
\alpha _{0}^{2}\cdots \alpha _{k-1}^{2}, & \text{if }k>0%
\end{array}%
\right. .  \label{moments}
\end{equation}%
It is easy to see that $W_{\alpha }$ is never normal, and that it is
hyponormal if and only if $\alpha _{0}\leq \alpha _{1}\leq \cdots $. \
Similarly, consider double-indexed positive bounded sequences $\alpha _{%
\mathbf{k}},\beta _{\mathbf{k}}\in \ell ^{\infty }(\mathbb{Z}_{+}^{2})$ , $%
\mathbf{k}\equiv (k_{1},k_{2})\in \mathbb{Z}_{+}^{2}$. \ We define the $2$%
-variable weighted shift $\mathbf{T}\equiv
(T_{1},T_{2})$ by 
\begin{equation*}
T_{1}e_{\mathbf{k}}:=\alpha _{\mathbf{k}}e_{\mathbf{k+}\varepsilon
_{1}}\quad \text{and}\quad T_{2}e_{\mathbf{k}}:=\beta _{\mathbf{k}}e_{%
\mathbf{k+}\varepsilon _{2}},
\end{equation*}%
where $\mathbf{\varepsilon }_{1}:=(1,0)$ and $\mathbf{\varepsilon }%
_{2}:=(0,1)$. \ Clearly, 
\begin{equation}
T_{1}T_{2}=T_{2}T_{1}\Longleftrightarrow \beta _{\mathbf{k+}\varepsilon
_{1}}\alpha _{\mathbf{k}}=\alpha _{\mathbf{k+}\varepsilon _{2}}\beta _{%
\mathbf{k}}\;\;(\text{all }\mathbf{k}\in \mathbb{Z}_{+}^{2}).
\label{commuting}
\end{equation}%
In an entirely similar way one can define multivariable weighted shifts. \
Given $\mathbf{k} \equiv (k_{1},k_{2}) \in \mathbb{Z}_{+}^{2}$, the \textit{%
moments} of $(\alpha ,\beta )$ of order $\mathbf{k}$ is 
\begin{equation*}
\gamma _{\mathbf{k}}\equiv \gamma _{\mathbf{k}}(\alpha ,\beta ):=%
\begin{cases}
1 & \text{if } k_{1}=0 \text{ and }k_{2}=0 \\ 
\alpha _{(0,0)}^{2}\cdots \alpha _{(k_{1}-1,0)}^{2} & \text{if }k_{1}\geq 1%
\text{ and }k_{2}=0 \\ 
\beta _{(0,0)}^{2}\cdots \beta _{(0,k_{2}-1)}^{2} & \text{if }k_{1}=0\text{
and }k_{2}\geq 1 \\ 
\alpha _{(0,0)}^{2}\cdots \alpha _{(k_{1}-1,0)}^{2}\cdot \beta
_{(k_{1},0)}^{2}\cdots \beta _{(k_{1},k_{2}-1)}^{2} & \text{if }k_{1}\geq 1%
\text{ and }k_{2}\geq 1.%
\end{cases}%
\end{equation*}%
We remark that, due to the commutativity condition (\ref{commuting}), $%
\gamma _{\mathbf{k}}$ can be computed using any nondecreasing path from $%
(0,0)$ to $(k_{1},k_{2})$. 

We now recall a well known characterization of
subnormality for multivariable weighted shifts \cite{JeLu} (due to C. Berger \cite[II.6.10]{Con} 
and independently established by R. Gellar and L.J. Wallen %
\cite{GeWa} in the single variable case): $\mathbf{T}$ admits a commuting normal extension if and only if there is a probability
measure $\mu $ (which we call the Berger measure of $\mathbf{T}$) defined on the $2$-dimensional rectangle $R=[0,a_{1}]\times
\lbrack 0,a_{2}]$ (where $a_{i}:=\left\| T_{i}\right\| ^{2}$) such that $%
\gamma _{\mathbf{k}}=\int_{R}s^{k_{1}}t^{k_{2}}d\mu (s,t),$ for all $\mathbf{%
k}\equiv (k_{1},k_{2})\in \mathbb{Z}_{+}^{2}$. \ 

The following well known result, which links the Berger measure of a
subnormal unilateral weighted shift with the Berger measure of its
restriction to a suitable invariant subspace, will be needed in Section \ref%
{Structure}. \ 

\begin{lemma}
\label{restrictionBerger}(\cite[p. 5140]{CuYo1}) \ Let $W_{\alpha }$ be a
subnormal unilateral weighted shift and let $\xi $ denote its Berger
measure. \ For $n\geq 1$ let $\mathcal{L}_{n}:=\bigvee \{e_{h}:h\geq n\}$
denote the invariant subspace obtained by removing the first $n$ vectors in
the canonical orthonormal basis of $\ell ^{2}(\mathbb{Z}_{+})$. $\ $Then the
Berger measure of $W_{\alpha }|_{\mathcal{L}_{n}}$ is 
\begin{equation}
d\xi _{n}(s):=\frac{s^{n}}{\gamma _{n}}d\xi (s),  \label{Berger}
\end{equation}%
where $\gamma _{n}$ is the $n$-th moment of $\alpha $, given by (\ref%
{moments}). \ 
\end{lemma}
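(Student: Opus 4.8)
The plan is to prove the moment-transfer formula $d\xi_n(s)=\frac{s^n}{\gamma_n}\,d\xi(s)$ directly, by verifying that the right-hand side is a probability measure whose moment sequence matches the weight data of the restricted shift $W_\alpha|_{\mathcal{L}_n}$, and then invoking the uniqueness of the Berger measure. First I would identify the weight sequence of the restriction: since $\mathcal{L}_n=\bigvee\{e_h:h\geq n\}$ is invariant under $W_\alpha$, the operator $W_\alpha|_{\mathcal{L}_n}$ is again a unilateral weighted shift with weights $\alpha_n,\alpha_{n+1},\alpha_{n+2},\dots$, after reindexing so that $e_n$ plays the role of the zeroth basis vector. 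Denoting its moments by $\gamma_k^{(n)}$, formula (\ref{moments}) gives $\gamma_k^{(n)}=\alpha_n^2\alpha_{n+1}^2\cdots\alpha_{n+k-1}^2$, which by the definition of the moments of $\alpha$ equals $\gamma_{n+k}/\gamma_n$.

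Next I would check that $d\xi_n$ is a genuine probability measure reproducing these moments. Because $W_\alpha$ is subnormal with Berger measure $\xi$, the moment identity $\gamma_k=\int s^k\,d\xi(s)$ holds for all $k\geq 0$; in particular the total mass of $d\xi_n$ is $\frac{1}{\gamma_n}\int s^n\,d\xi(s)=\frac{\gamma_n}{\gamma_n}=1$, so $\xi_n$ is a probability measure (it is clearly nonnegative, being $s^n/\gamma_n$ times a positive measure on $[0,\infty)$). Then for each $k\geq 0$ I would compute
\begin{equation*}
\int s^k\,d\xi_n(s)=\frac{1}{\gamma_n}\int s^{k+n}\,d\xi(s)=\frac{\gamma_{n+k}}{\gamma_n}=\gamma_k^{(n)},
\end{equation*}
so $\xi_n$ reproduces exactly the moments of the restricted shift.

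Finally I would appeal to the Berger characterization recalled just before the lemma: a subnormal unilateral weighted shift has a \emph{unique} Berger measure determined by its moment sequence (uniqueness follows because the measures are supported on a compact interval $[0,\|W_\alpha\|^2]$, where the moment problem is determinate). Since $W_\alpha|_{\mathcal{L}_n}$ is subnormal (a restriction of a subnormal operator to an invariant subspace is subnormal) and $\xi_n$ is a probability measure reproducing its moments, $\xi_n$ must be its Berger measure, which is precisely the claim.

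I do not expect any step here to present a serious obstacle; the result is essentially a bookkeeping identity on moments combined with determinacy of the compactly supported moment problem. The only point requiring mild care is the reindexing of the restricted shift so that formula (\ref{moments}) applies with the correct initial weight $\alpha_n$, together with the observation that $W_\alpha|_{\mathcal{L}_n}$ inherits subnormality from $W_\alpha$; once these are in place, matching the moments and invoking uniqueness finishes the argument.
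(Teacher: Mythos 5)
Your proof is correct: the paper itself states this lemma without proof (citing \cite[p.~5140]{CuYo1}), and your argument---identifying the weights of $W_{\alpha}|_{\mathcal{L}_{n}}$ as $\alpha_{n},\alpha_{n+1},\dots$, checking that $\frac{s^{n}}{\gamma_{n}}d\xi(s)$ is a probability measure with moments $\gamma_{n+k}/\gamma_{n}$, and invoking subnormality of the restriction together with determinacy of the compactly supported moment problem---is precisely the standard proof of the cited result. No gaps; the one point needing care (reindexing so that (\ref{moments}) applies to the restricted shift) is handled correctly.
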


We will occasionally write $\operatorname{shift}(\alpha _{0},\alpha _{1},\cdots )$
to denote the weighted shift with weight sequence $\{\alpha
_{k}\}_{k=0}^{\infty }$. \ We also denote by $U_{+}:=\operatorname{shift}(1,1,\cdots
)$ the (unweighted) unilateral shift, and for $0<a<1$ we let $S_{a}:=\operatorname{%
shift}(a,1,1,\cdots )$. \ Observe that $U_{+}$ and $S_{a}$ are subnormal,
with Berger measures $\delta _{1}$ and $(1-a^{2})\delta _{0}+a^{2}\delta
_{1} $, respectively, where $\delta _{p}$ denotes the point-mass probability
measure with support the singleton $\{p\}$.

Given integers $i$ and $m$ $(m\geq 1,0\leq i\leq m-1),$ consider $\mathcal{H}%
\equiv \ell ^{2}(\mathbb{Z}_{+})=\vee \{e_{n}:n\geq 0\}$ and define $%
\mathcal{H}_{i}:=\vee \{e_{mj+i}:j\geq 0\},$ so $\mathcal{H}=\oplus
_{i=0}^{m-1}\mathcal{H}_{i}.$ \ For a sequence $\alpha \equiv \{\alpha
_{n}\}_{n=0}^{\infty }$, let $\alpha (m:i):=\{\Pi _{k=0}^{m-1}\alpha
_{mj+i+k}\}_{j=0}^{\infty }$, that is, $\alpha (m:i)$ denotes the sequence
of products of numbers in adjacent packets of size $m,$ beginning with the product $%
\alpha _{i}\cdots \alpha _{i+m-1}.$ \ For example, $\alpha (2:0):\alpha
_{0}\alpha _{1},\alpha _{2}\alpha _{3},\alpha _{4}\alpha _{5},\cdots $, and $%
\alpha (3:2):\alpha _{2}\alpha _{3}\alpha _{4},\alpha _{5}\alpha _{6}\alpha
_{7},\cdots $. \ Then for $m\geq 1$ and $0\leq i\leq m-1$, $W_{\alpha (m:i)}$
is unitarily equivalent to $W_{\alpha }^{m}|_{\mathcal{H}_{i}}.$ \
Therefore, $W_{\alpha }^{m}$ is unitarily equivalent to $\oplus
_{i=0}^{m-1}W_{\alpha (m:i)}.$ \ This analysis naturally leads to the
following result, which will be needed in Section \ref{pairs}.

\begin{lemma}
\label{powermeasure}(\cite[Theorem 2.9]{CuP}) \ Let $W_{\alpha }$ be a
subnormal unilateral weighted shift with Berger measure $\mu $. \ Then $%
W_{\alpha (m,i)}$ is subnormal with Berger measure $\mu _{(m,i)}$, where 
\begin{equation*}
\begin{tabular}{l}
$d\mu _{(m,0)}(s)=d\mu (s^{\frac{1}{m}})$ and $d\mu _{(m,i)}(s)=\frac{s^{%
\frac{i}{m}}}{\gamma _{i}}d\mu (s^{\frac{1}{m}})$ for $1\leq i\leq m-1$ .%
\end{tabular}%
\end{equation*}
\end{lemma}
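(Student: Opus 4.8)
The plan is to reduce the claim to the one–variable structure recalled just before the statement, namely the unitary equivalence $W_\alpha^m \cong \oplus_{i=0}^{m-1} W_{\alpha(m:i)}$, and then to identify the Berger measure of each summand $W_{\alpha(m:i)}$. I would begin from the moment characterization of subnormality: $W_\alpha$ subnormal with Berger measure $\mu$ means $\gamma_k(\alpha) = \int_0^{a} s^k\, d\mu(s)$ for all $k\ge 0$, where $a=\|W_\alpha\|^2$. The key observation is that the moments of the derived shift $W_{\alpha(m:i)}$ are themselves moments of $W_\alpha$ along an arithmetic progression. Concretely, I would compute the $j$-th moment $\gamma_j(\alpha(m:i))$ as a telescoping product of the $\alpha(m:i)$ weights and recognize it, via the definition in \eqref{moments}, as a ratio $\gamma_{mj+i}(\alpha)/\gamma_i(\alpha)$. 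This is the essential algebraic step and it is routine, but it must be done carefully so that the index bookkeeping matches the definition $\alpha(m:i):=\{\Pi_{k=0}^{m-1}\alpha_{mj+i+k}\}_{j\ge 0}$ given above.

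Next I would translate the moment identity $\gamma_j(\alpha(m:i)) = \gamma_{mj+i}(\alpha)/\gamma_i(\alpha)$ into an integral against $\mu$. Substituting the Berger representation for $W_\alpha$ gives
\begin{equation*}
\gamma_j(\alpha(m:i)) = \frac{1}{\gamma_i}\int_0^{a} s^{mj+i}\, d\mu(s) = \int_0^{a} (s^m)^j \,\frac{s^i}{\gamma_i}\, d\mu(s).
\end{equation*}
The plan is then to perform the change of variable $u = s^m$ (equivalently $s = u^{1/m}$) to convert $(s^m)^j$ into $u^j$, so that the $j$-th moment of $W_{\alpha(m:i)}$ is exhibited as $\int u^j\, d\nu(u)$ for an explicit measure $\nu$ on $[0,a^{1/m}]$ or more precisely on the image of $[0,a]$ under $s\mapsto s^m$. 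Pushing $\mu$ forward under this map and incorporating the density $s^i/\gamma_i = u^{i/m}/\gamma_i$ yields exactly the claimed formulas: $d\mu_{(m,0)}(s) = d\mu(s^{1/m})$ in the case $i=0$ (where the density factor is $1$), and $d\mu_{(m,i)}(s) = (s^{i/m}/\gamma_i)\, d\mu(s^{1/m})$ for $1\le i\le m-1$. Here $d\mu(s^{1/m})$ denotes the pushforward of $\mu$ under $s\mapsto s^m$, matching the paper's notation.

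Finally, I would verify that $\nu$ is genuinely a probability measure and that it is the Berger measure, not merely a measure with the right moments. Total mass is checked by taking $j=0$: for $i=0$ the mass is $\mu([0,a])=1$, and for $1\le i\le m-1$ the mass is $\gamma_i/\gamma_i = 1$ by construction, so $\nu$ is a probability measure in every case. Since a Berger measure on a compact interval is determined uniquely by its moment sequence (the associated Hamburger/Stieltjes moment problem on a bounded interval is determinate), matching all moments $\gamma_j(\alpha(m:i))$ forces $\nu$ to be the Berger measure of $W_{\alpha(m:i)}$; this also establishes subnormality of each summand. The only place demanding genuine care — the \emph{main obstacle} — is the change-of-variables step, since the map $s\mapsto s^m$ is a homeomorphism of $[0,a]$ onto $[0,a^m]$ and one must track how mass at the origin and the density factor $s^{i/m}$ transform under the pushforward; writing $d\mu(s^{1/m})$ as the image measure and confirming that the substitution is valid for the atom at $0$ (where $s^{i/m}=0$ for $i\ge 1$, so no atom survives at the origin in $\mu_{(m,i)}$) is the subtle part, whereas the moment algebra itself is straightforward.
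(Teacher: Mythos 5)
Your proposal is correct, and it is essentially the canonical argument: the moment identity $\gamma_j(\alpha(m:i))=\gamma_{mj+i}(\alpha)/\gamma_i(\alpha)$, the pushforward of $\frac{s^i}{\gamma_i}\,d\mu(s)$ under $s\mapsto s^m$, and determinacy of the moment problem on a compact interval (so the representing probability measure must be the Berger measure) are precisely the ingredients behind the cited result. Note that the paper itself gives no proof of this lemma --- it is quoted from \cite[Theorem 2.9]{CuP} --- and your route is the one its preceding discussion sets up, namely the unitary equivalence of $W_{\alpha}^{m}$ with $\oplus_{i=0}^{m-1}W_{\alpha(m:i)}$ followed by identification of each summand's moments (your only slip, the interval $[0,a^{1/m}]$, should read $[0,a^{m}]$, as your own correction ``the image of $[0,a]$ under $s\mapsto s^{m}$'' indicates).
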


\section{Statement of the main result}

In (\cite{CLY2}) we showed that if $\mathbf{T}\in 
\mathcal{TC}$ (see Figure \ref{FigureROMP}(i)), 
\begin{equation}
\mathbf{T}^{(1,2)}\in \mathfrak{H}_{\infty
}\Leftrightarrow \mathbf{T}^{(2,1)}\in \mathfrak{H}%
_{\infty }\Leftrightarrow \mathbf{T}\in \mathfrak{H}%
_{\infty }.  \label{previous}
\end{equation}%
The main result in this paper, which follows, is based on (\ref{previous})
and a myriad of examples that have arisen in our previous research.

\begin{theorem}
\label{Main 1} Let $\mathbf{T}\in \mathcal{TC}$. \
The following statements are equivalent.\newline
(i) $\mathbf{T}\in \mathfrak{H}_{\infty }$;\newline
(ii) $\mathbf{T}^{(m,n)}\in \bigoplus \mathfrak{H}%
_{\infty }$ for all $m,n\geq 1$;\newline
(iii) $\mathbf{T}^{(m,n)}\in \bigoplus \mathfrak{H}%
_{\infty }$ for some $m,n\geq 1$.
\end{theorem}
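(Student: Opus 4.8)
The plan is to prove the chain of equivalences by establishing $(i)\Rightarrow(ii)\Rightarrow(iii)\Rightarrow(i)$, where the implications $(i)\Rightarrow(ii)$ and $(ii)\Rightarrow(iii)$ are the easy directions and $(iii)\Rightarrow(i)$ is the heart of the matter. For $(i)\Rightarrow(ii)$, I would argue that if $\mathbf{T}$ is subnormal with Berger measure $\mu$ on the rectangle $R$, then each power $\mathbf{T}^{(m,n)}$ is subnormal (the normal extension of $\mathbf{T}$ raised to the appropriate powers gives a normal extension of $\mathbf{T}^{(m,n)}$), and the orthogonal direct sum decomposition $\ell^2(\mathbb{Z}_+^2)=\oplus_{p,q}\mathcal{H}^{(m,n)}_{(p,q)}$ from (\ref{hpq}) reduces $\mathbf{T}^{(m,n)}$, so each summand $\mathbf{T}^{(m,n)}|_{\mathcal{H}^{(m,n)}_{(p,q)}}$ is itself subnormal; hence $\mathbf{T}^{(m,n)}\in\bigoplus\mathfrak{H}_\infty$. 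The implication $(ii)\Rightarrow(iii)$ is trivial.

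The essential work is $(iii)\Rightarrow(i)$, and here I would follow the five-step strategy outlined in the introduction. First I would invoke the structural result promised for Section \ref{Structure}: since $\mathbf{T}\in\mathcal{TC}$, each power $\mathbf{T}^{(m,n)}$ lies in $\bigoplus\mathcal{TC}$, so every direct summand $\mathbf{T}^{(m,n)}|_{\mathcal{H}^{(m,n)}_{(p,q)}}$ is again a $2$-variable weighted shift of class $\mathcal{TC}$, determined by its own five parameters. Next I would reduce to the one-variable power in the second coordinate: because the roles of the two variables are symmetric, it suffices to handle $\mathbf{T}^{(1,n)}$, and the passage from $\mathbf{T}^{(m,n)}$ to the case $m=1$ is accomplished by applying the argument separately in each variable (equivalently, by the ``without loss of generality $m=1$'' reduction of step (ii) in the strategy). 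Then, using Lemmas \ref{restrictionBerger}, \ref{powermeasure}, and the reconstruction formula for Berger measures, I would track how the five parameters $(\sigma,\tau,\xi,\eta,a)$ of $\mathbf{T}$ determine the five parameters of each summand, and correspondingly how the linear functionals $(\varphi,\psi)$ attached to $\mathbf{T}$ (via \cite{ROMP}) relate to the functionals $(\varphi^{(p,q)},\psi^{(p,q)})$ attached to the summands.

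The crux of the argument is the following positivity transfer. By the criterion recalled from \cite{ROMP}, subnormality of a pair in $\mathcal{TC}$ is equivalent to positivity of its two associated functionals. So assuming $\mathbf{T}^{(1,n)}\in\bigoplus\mathfrak{H}_\infty$, every summand is subnormal, whence $\varphi^{(p,q)}\geq 0$ and $\psi^{(p,q)}\geq 0$ for all admissible $(p,q)$. The decisive step (step (iv)) is to identify two specific summands whose positive functionals force $\varphi\geq 0$ and $\psi\geq 0$ for the original pair: I expect that the functional $\varphi^{(0,0)}$ coincides with (or dominates) $\varphi$, and $\psi^{(0,1)}$ with $\psi$, after the change of variables $s\mapsto s^{1/m}$, $t\mapsto t^{1/n}$ inherent in the power operation. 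Once $\varphi\geq 0$ and $\psi\geq 0$ are established, the \cite{ROMP} criterion yields $\mathbf{T}\in\mathfrak{H}_\infty$, completing $(iii)\Rightarrow(i)$.

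The main obstacle I anticipate is precisely this positivity transfer: showing that nonnegativity of the summand functionals $\varphi^{(p,q)},\psi^{(p,q)}$ pulls back to nonnegativity of $\varphi,\psi$. The difficulty is twofold. First, the functionals are defined in terms of the Berger measures and moment data, and the power map deforms these through the substitutions of Lemma \ref{powermeasure} together with the moment-weight factors $s^{i/m}/\gamma_i$; I must verify that the identification of $\varphi$ with $\varphi^{(0,0)}$ (and $\psi$ with $\psi^{(0,1)}$) survives these deformations rather than mixing contributions from several summands in a way that could cancel a negativity. Second, I must confirm that choosing the single pair $(p,q)=(0,0)$ for $\varphi$ and $(p,q)=(0,1)$ for $\psi$ is genuinely sufficient—that no other summand carries extra obstructions to subnormality of $\mathbf{T}$ not already visible in these two. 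I expect the resolution to hinge on a careful bookkeeping of the five-parameter data under the action $(m,n)\mapsto\mathbf{T}^{(m,n)}$, supported by the explicit examples mentioned in the introduction, which is why the structural analysis of Section \ref{Structure} and the parameter computations of Section \ref{pairs} must be in place before this final positivity argument can be closed.
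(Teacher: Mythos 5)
Your proposal is correct and follows essentially the same route as the paper: the easy implications, the decomposition of $\mathbf{T}^{(m,n)}$ into $\bigoplus\mathcal{TC}$, the reduction to $m=1$ via $\mathbf{T}^{(m,n)}=(\mathbf{T}^{(m,1)})^{(1,n)}$ and symmetry, and the positivity transfer through exactly the two summand functionals the paper uses ($\varphi^{(0,0)}$ and $\psi^{(0,1)}$), closed by the criterion of Lemma \ref{LemmaROMP}. The paper confirms both points you flagged as potential obstacles: one computes $\varphi^{(0,0)}=\varphi$ exactly and $d\psi^{(0,1)}(t)=\frac{t}{y_1^2\cdots y_n^2}\,d\psi(t^{1/n})$, so positivity pulls back with no mixing of summands, and no other $(p,q)$ is needed since subnormality of $\mathbf{T}$ is governed by $\varphi$ and $\psi$ alone.
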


\section{Some basic facts}

For the reader's convenience, in this section we list several well known
auxiliary results and definitions which are needed for the proof of the
main result. \ First, to detect hyponormality for $2$%
-variable weighted shifts we use a simple criterion involving a base point $%
\mathbf{k}$ in $\mathbb{Z}_{+}^{2}$ and its five neighboring points in $%
\mathbf{k}+\mathbb{Z}_{+}^{2}$ at path distance at most $2$.

\begin{lemma}
(\cite{bridge})\label{joint hypo}(Six-point Test) \ Let $\mathbf{T}\equiv (T_{1},T_{2})$ be a $2$-variable weighted shift, with
weight sequences $\alpha $ and $\beta $. \ Then 
\begin{equation*}
\begin{tabular}{l}
$\lbrack \mathbf{T}^{\ast },\mathbf{T}\mathbf{]\geq }0 \Longleftrightarrow$ \\ 
\\ 
$H_{(k_{1},k_{2})}(1):=\left( 
\begin{array}{cc}
\alpha _{\mathbf{k}+\mathbf{\varepsilon }_{1}}^{2}-\alpha _{\mathbf{k}}^{2}
& \alpha _{\mathbf{k}+\mathbf{\varepsilon }_{2}}\beta _{\mathbf{k}+\mathbf{%
\varepsilon }_{1}}-\alpha _{\mathbf{k}}\beta _{\mathbf{k}} \\ 
\alpha _{\mathbf{k}+\mathbf{\varepsilon }_{2}}\beta _{\mathbf{k}+\mathbf{%
\varepsilon }_{1}}-\alpha _{\mathbf{k}}\beta _{\mathbf{k}} & \beta _{\mathbf{%
k}+\mathbf{\varepsilon }_{2}}^{2}-\beta _{\mathbf{k}}^{2}%
\end{array}%
\right) \geq 0 \ (\text{all } \mathbf{k}\in \mathbb{Z}_{+}^{2})%
\text{.}$%
\end{tabular}%
\end{equation*}
\end{lemma}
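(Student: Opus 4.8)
The plan is to compute the four self-commutators entrywise in the canonical basis and then to exploit the fact that $[\mathbf{T}^{\ast},\mathbf{T}]$ is block diagonal with respect to the decomposition of $\ell^2(\mathbb{Z}_+^2)$ into anti-diagonals. Writing $\varepsilon_1=(1,0)$ and $\varepsilon_2=(0,1)$, and interpreting any weight with a negative index as $0$, I would first record that $[T_1^{\ast},T_1]e_{\mathbf{k}}=(\alpha_{\mathbf{k}}^2-\alpha_{\mathbf{k}-\varepsilon_1}^2)e_{\mathbf{k}}$ and $[T_2^{\ast},T_2]e_{\mathbf{k}}=(\beta_{\mathbf{k}}^2-\beta_{\mathbf{k}-\varepsilon_2}^2)e_{\mathbf{k}}$, so the two diagonal blocks are diagonal operators, while $[T_2^{\ast},T_1]e_{\mathbf{k}}=(\alpha_{\mathbf{k}}\beta_{\mathbf{k}+\varepsilon_1-\varepsilon_2}-\alpha_{\mathbf{k}-\varepsilon_2}\beta_{\mathbf{k}-\varepsilon_2})e_{\mathbf{k}+\varepsilon_1-\varepsilon_2}$ (valid for $k_2\ge 1$, and $0$ otherwise) sends each basis vector to a scalar multiple of a single basis vector, with $[T_1^{\ast},T_2]=[T_2^{\ast},T_1]^{\ast}$. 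The decisive observation is that the shift $\varepsilon_1-\varepsilon_2$ preserves the quantity $k_1+k_2$; hence all four commutators leave invariant each anti-diagonal subspace $\mathcal{D}_M:=\vee\{e_{\mathbf{k}}:k_1+k_2=M\}$, and $[\mathbf{T}^{\ast},\mathbf{T}]$ is the orthogonal direct sum, over $M\ge 0$, of its compressions to $\mathcal{D}_M\oplus\mathcal{D}_M$.

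Next I would analyze the quadratic form on a single level. Enumerate the points of the $M$-th anti-diagonal as $Q_i:=(i,M-i)$, $0\le i\le M$, and test $[\mathbf{T}^{\ast},\mathbf{T}]$ on $(f,g)$ with $f=\sum_i a_i e_{Q_i}$ and $g=\sum_i b_i e_{Q_i}$. The diagonal blocks contribute $\sum_i(\alpha_{Q_i}^2-\alpha_{Q_i-\varepsilon_1}^2)|a_i|^2+\sum_i(\beta_{Q_i}^2-\beta_{Q_i-\varepsilon_2}^2)|b_i|^2$, while the off-diagonal blocks contribute a sum of cross terms. The key structural point — and the step I expect to require the most careful index bookkeeping — is that this coupling is a \emph{perfect, non-overlapping matching}: the coefficient $a_{i+1}$ (the value of $f$ at $Q_{i+1}$) is coupled only to $b_i$ (the value of $g$ at $Q_i$), because $[T_2^{\ast},T_1]e_{Q_i}$ is a multiple of $e_{Q_i+\varepsilon_1-\varepsilon_2}=e_{Q_{i+1}}$ and of nothing else, and symmetrically for $[T_1^{\ast},T_2]$. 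There is no chaining, so the whole level-$M$ form splits as an orthogonal direct sum of $M$ two-dimensional forms in the paired variables $(a_{i+1},b_i)$, $0\le i\le M-1$, together with the two leftover diagonal terms $\alpha_{(0,M)}^2|a_0|^2$ and $\beta_{(M,0)}^2|b_M|^2$, both manifestly nonnegative.

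Finally I would identify each two-dimensional summand with the matrix in the statement. Setting $\mathbf{k}:=(i,M-i-1)$ one checks $Q_{i+1}=\mathbf{k}+\varepsilon_1$ and $Q_i=\mathbf{k}+\varepsilon_2$, and substituting these into the coefficients above shows that the matrix of the form in $(a_{i+1},b_i)$ is exactly $H_{\mathbf{k}}(1)$: the $(1,1)$ entry becomes $\alpha_{\mathbf{k}+\varepsilon_1}^2-\alpha_{\mathbf{k}}^2$, the $(2,2)$ entry $\beta_{\mathbf{k}+\varepsilon_2}^2-\beta_{\mathbf{k}}^2$, and the off-diagonal entry $\alpha_{\mathbf{k}+\varepsilon_2}\beta_{\mathbf{k}+\varepsilon_1}-\alpha_{\mathbf{k}}\beta_{\mathbf{k}}$. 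As $M\ge 0$ and $0\le i\le M-1$ vary, the base point $\mathbf{k}$ runs over all of $\mathbb{Z}_+^2$ (given $\mathbf{k}=(k_1,k_2)$, take $i=k_1$ and $M=k_1+k_2+1$).

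Since a block-diagonal self-adjoint operator is positive semidefinite exactly when each block is, and each level-$M$ block is itself an orthogonal sum of the $2\times 2$ matrices $H_{\mathbf{k}}(1)$ and of nonnegative scalars, I conclude that $[\mathbf{T}^{\ast},\mathbf{T}]\ge 0$ if and only if $H_{\mathbf{k}}(1)\ge 0$ for all $\mathbf{k}\in\mathbb{Z}_+^2$. This single computation delivers both implications at once: necessity also drops out by restricting the form to the two-dimensional subspace $\vee\{e_{\mathbf{k}+\varepsilon_1},e_{\mathbf{k}+\varepsilon_2}\}$, while sufficiency is the content of the decoupling. The only real obstacle is the verification that the off-diagonal coupling within each anti-diagonal is a non-overlapping matching; once that is pinned down, the rest is the routine matching of indices carried out above.
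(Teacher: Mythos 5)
Your argument is correct and complete. One important point of comparison: the present paper contains no proof of this lemma at all --- it is quoted directly from the reference \cite{bridge}, so what you have produced is a self-contained reconstruction of the argument in that source rather than an alternative to anything proved here. Your reconstruction is the standard one, and every step checks out: the commutator formulas are right (including the vanishing of $[T_2^{\ast},T_1]$ on basis vectors with $k_2=0$, which your convention on negative indices handles); the shift by $\varepsilon_1-\varepsilon_2$ does preserve $k_1+k_2$, so the block decomposition over anti-diagonals $\mathcal{D}_M$ is legitimate; and the coupling within level $M$ really is a non-overlapping matching, since the variables $a_1,\dots,a_M$ and $b_0,\dots,b_{M-1}$ each occur in exactly one of the pairs $(a_{i+1},b_i)$, leaving only the manifestly nonnegative corner terms $\alpha_{(0,M)}^2|a_0|^2$ and $\beta_{(M,0)}^2|b_M|^2$ uncoupled. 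With $\mathbf{k}=(i,M-i-1)$ the identification of each $2\times 2$ block with $H_{\mathbf{k}}(1)$ is exact, and the parametrization $i=k_1$, $M=k_1+k_2+1$ shows that every $\mathbf{k}\in\mathbb{Z}_+^2$ arises exactly once, so positivity of $[\mathbf{T}^{\ast},\mathbf{T}]$ is indeed equivalent to positivity of all the matrices $H_{\mathbf{k}}(1)$; your remark that necessity also follows by compressing to the span of $e_{\mathbf{k}+\varepsilon_1}$ in the first component and $e_{\mathbf{k}+\varepsilon_2}$ in the second is likewise correct, since that compression is exactly $H_{\mathbf{k}}(1)$. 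The only facts you leave implicit --- that a block-diagonal self-adjoint operator is positive if and only if each block is, and that it suffices to test the quadratic form on finitely supported vectors --- are routine.
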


Next, we present a criterion to detect the subnormality of $2$-variable
weighted shifts. \ First, we need some definitions.

(i) Let $\mu $ and $\nu $ be two positive measures on $\mathbb{R}_{+}.$ \ We
say that $\mu \leq \nu $ on $X:=\mathbb{R}_{+},$ if $\mu (E)\leq \nu (E)$
for all Borel subset $E\subseteq \mathbb{R}_{+}$; equivalently, $\mu \leq
\nu $ if and only if $\int fd\mu \leq \int fd\nu $ for all $f\in C(X)$ such
that $f\geq 0$ on $\mathbb{R}_{+}$.

(ii)\ Let $\mu $ be a probability measure on $X\times Y$, and assume that $%
\frac{1}{t}\in L^{1}(\mu ).$ \ The \textit{extremal measure} $\mu _{ext}$
(which is also a probability measure) on $X\times Y$ is given by $d\mu
_{ext}(s,t):=(1-\delta _{0}(t))\frac{1}{t\left\| \frac{1}{t}\right\|
_{L^{1}(\mu )}}d\mu (s,t)$.

(iii) Given a measure $\mu $ on $X\times Y$, the \textit{marginal measure} $%
\mu ^{X}$ is given by $\mu ^{X}:=\mu \circ \pi _{X}^{-1}$, where $\pi
_{X}:X\times Y\rightarrow X$ is the canonical projection onto $X$. \ Thus $%
\mu ^{X}(E)=\mu (E\times Y)$, for every $E\subseteq X$.

To state the following result, recall the notation in (\ref{hpq}), and let $%
\mathcal{M}:=\mathcal{M}_1 \equiv \mathcal{H}_{(0,1)}^{(1,1)}$. \ 

\begin{lemma}
\label{backext}(\cite[Proposition 3.10]{CuYo1}) \ (Subnormal backward
extension) \ Let $\mathbf{T}\equiv (T_{1},T_{2})$ be
a $2$-variable weighted shift, and assume that $\mathbf{T}\mathbf{|}_{\mathcal{M}}$ is subnormal with associated measure $\mu
_{\mathcal{M}}$ and that $W_{0}:=\operatorname{shift}(\alpha _{00},\alpha
_{10},\cdots )$ is subnormal with associated measure $\sigma $. \ Then $\mathbf{T}$ is subnormal if and only if\newline
(i) $\ \frac{1}{t}\in L^{1}(\mu _{\mathcal{M}})$;\newline
(ii) $\ \beta _{00}^{2}\leq (\left\| \frac{1}{t}\right\| _{L^{1}(\mu _{%
\mathcal{M}})})^{-1}$;\newline
(iii) $\ \beta _{00}^{2}\left\| \frac{1}{t}\right\| _{L^{1}(\mu _{\mathcal{M}%
})}(\mu _{\mathcal{M}})_{ext}^{X}\leq \sigma $.

Moreover, if $\beta _{00}^{2}\left\| \frac{1}{t}\right\| _{L^{1}(\mu _{%
\mathcal{M}})}=1,$ then $(\mu _{\mathcal{M}})_{ext}^{X}=\sigma $. \ In the
case when $\mathbf{T}$ is subnormal, its Berger
measure $\mu $ is given by 
\begin{eqnarray}
d\mu (s,t) &=&\beta _{00}^{2}\left\| \frac{1}{t}\right\| _{L^{1}(\mu _{%
\mathcal{M}})}d(\mu _{\mathcal{M}})_{ext}(s,t)  \notag \\
&&+\left( d\sigma (s)-\beta _{00}^{2}\left\| \frac{1}{t}\right\| _{L^{1}(\mu
_{\mathcal{M}})}d(\mu _{\mathcal{M}})_{ext}^{X}(s)\right) d\delta _{0}(t).
\label{reconstruct}
\end{eqnarray}
\end{lemma}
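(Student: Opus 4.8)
The plan is to pass to Berger measures and reconstruct a candidate measure for $\mathbf{T}$ from the pair $(\mu_{\mathcal{M}},\sigma)$. By Berger's theorem recalled above, $\mathbf{T}$ is subnormal if and only if there is a probability measure $\mu$ on $R$ with $\gamma_{(k_1,k_2)}=\int_R s^{k_1}t^{k_2}\,d\mu(s,t)$ for all $(k_1,k_2)\in\mathbb{Z}_+^2$. First I would record the $2$-variable analogue of Lemma~\ref{restrictionBerger} in the $t$-direction: if $\mathbf{T}$ is subnormal with Berger measure $\mu$, then the base vector of $\mathbf{T}|_{\mathcal{M}}$ is $e_{(0,1)}$, so comparing moments gives $\gamma^{\mathcal{M}}_{(k_1,k_2)}=\gamma_{(k_1,k_2+1)}/\gamma_{(0,1)}$, and since $\gamma_{(0,1)}=\beta_{00}^{2}$ this is equivalent to $d\mu_{\mathcal{M}}(s,t)=(t/\beta_{00}^{2})\,d\mu(s,t)$. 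In particular $\mu_{\mathcal{M}}$ charges no mass on the $s$-axis $\{t=0\}$.

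For the forward implication, assume $\mathbf{T}$ is subnormal with Berger measure $\mu$. Inverting the identity $d\mu_{\mathcal{M}}=(t/\beta_{00}^{2})\,d\mu$ on $\{t>0\}$ gives $\int(1/t)\,d\mu_{\mathcal{M}}=(1/\beta_{00}^{2})\,\mu(\{t>0\})\le 1/\beta_{00}^{2}<\infty$, which is (i), and rearranging yields $\beta_{00}^{2}\|1/t\|_{L^{1}(\mu_{\mathcal{M}})}=\mu(\{t>0\})\le 1$, which is (ii). Splitting $\mu$ into its parts on $\{t>0\}$ and $\{t=0\}$, the definition of the extremal measure shows that the $\{t>0\}$ part equals $\beta_{00}^{2}\|1/t\|_{L^{1}(\mu_{\mathcal{M}})}\,(\mu_{\mathcal{M}})_{ext}$, while the $s$-marginal $\mu^{X}$ must equal $\sigma$ since both reproduce the bottom-row moments $\gamma_{(k_1,0)}$; nonnegativity of the $\{t=0\}$ part then gives (iii).

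For the converse, assume (i)--(iii), define $\mu$ by the displayed formula (\ref{reconstruct}), and verify it is the Berger measure of $\mathbf{T}$. Condition (i) makes $(\mu_{\mathcal{M}})_{ext}$ well defined and, crucially, forces $\mu_{\mathcal{M}}(\{t=0\})=0$, since otherwise $1/t\notin L^{1}(\mu_{\mathcal{M}})$. Condition (iii) makes the correction measure $d\sigma(s)-\beta_{00}^{2}\|1/t\|_{L^{1}(\mu_{\mathcal{M}})}\,d(\mu_{\mathcal{M}})_{ext}^{X}(s)$ nonnegative (and, on total masses, forces (ii)), while the total mass of $\mu$ is automatically $1$ because $(\mu_{\mathcal{M}})_{ext}$ and $\sigma$ are probability measures; hence $\mu$ is a probability measure on $R$. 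I would then check $\gamma_{(k_1,k_2)}=\int s^{k_1}t^{k_2}\,d\mu$ in two regimes. For $k_2\ge 1$ the $d\delta_{0}(t)$ term drops out and unwinding the extremal measure returns $\beta_{00}^{2}\int s^{k_1}t^{k_2-1}\,d\mu_{\mathcal{M}}=\beta_{00}^{2}\gamma^{\mathcal{M}}_{(k_1,k_2-1)}=\gamma_{(k_1,k_2)}$, where the vanishing of $\mu_{\mathcal{M}}$ on $\{t=0\}$ is exactly what makes the $k_2=1$ case come out right. For $k_2=0$ the extremal-measure term and the marginal correction combine to give $\int s^{k_1}\,d\mu=\int s^{k_1}\,d\sigma=\gamma_{(k_1,0)}$.

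The ``moreover'' clause is then immediate: if $\beta_{00}^{2}\|1/t\|_{L^{1}(\mu_{\mathcal{M}})}=1$, the nonnegative measure $\sigma-(\mu_{\mathcal{M}})_{ext}^{X}$ has total mass $1-1=0$ and is therefore zero, so $(\mu_{\mathcal{M}})_{ext}^{X}=\sigma$. The main obstacle I anticipate is the bookkeeping at the $s$-axis: the bottom row $k_2=0$ contributes a $t$-atom that $\mu_{\mathcal{M}}$ cannot detect, so the two delicate points are recognizing that (i) is exactly the condition ruling out mass of $\mu_{\mathcal{M}}$ on $\{t=0\}$, and matching the $s$-marginal of the reconstructed measure to $\sigma$ while keeping the correction nonnegative. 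The extremal measure plays only the role of renormalizing the inverted measure $(1/t)\,d\mu_{\mathcal{M}}$ into a probability measure before $\mu$ is reassembled.
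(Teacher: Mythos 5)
The paper itself offers no proof of this lemma: it is quoted verbatim from \cite[Proposition 3.10]{CuYo1}, so the only benchmark is the original argument in that reference. Your proof is correct and follows essentially that same route: the moment identity $d\mu_{\mathcal{M}}(s,t)=(t/\beta_{00}^{2})\,d\mu(s,t)$ (the two-variable analogue of Lemma \ref{restrictionBerger}, justified by determinacy of the moment problem on a compact rectangle), inversion on $\{t>0\}$ to extract (i)--(iii) together with the identification $\beta_{00}^{2}\left\|\frac{1}{t}\right\|_{L^{1}(\mu_{\mathcal{M}})}=\mu(\{t>0\})$, and in the converse a direct verification that the measure defined by (\ref{reconstruct}) is a probability measure reproducing every moment $\gamma_{\mathbf{k}}$ --- with the two delicate points handled properly, namely that (i) forces $\mu_{\mathcal{M}}(\{t=0\})=0$ (which is what makes the $k_{2}=1$ moments come out right) and that the extremal-measure term and its marginal cancel in the $k_{2}=0$ row, leaving exactly $\sigma$.
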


\section{\label{Structure}The structure of powers of $2$-variable weighted shifts in $\mathcal{TC}$}

Consider a $2$-variable weighted shift $\mathbf{T}\equiv (T_{1},T_{2})\in \mathcal{TC}$ (see Figure \ref{FigureROMP}(i)). \
Since $T_{1}$ (resp. $T_{2}$) is subnormal, we know that $\operatorname{shift}%
(\alpha _{1},\alpha _{2},\alpha _{3}\cdots )$ (resp. $\operatorname{shift}(\beta
_{1},\beta _{2},\beta _{3}\cdots )$) is subnormal; let $\xi $ (resp. $\eta $%
) be its Berger measure. \ Similarly, let $\sigma $ (resp. $\tau $)
denote the Berger measure of $\operatorname{shift}(x_{0},x_{1},x_{2},\cdots )$
(resp. $\operatorname{shift}(y_{0},y_{1},y_{2},\cdots )$). \ Finally, let $\tau
_{1}$ be the Berger measure of $\operatorname{shift}(y_{1},y_{2},y_{3},\cdots )$ $%
\equiv \operatorname{shift}(y_{0},y_{1},y_{2},\cdots )|_{\vee \{e_{k}:k\geq 1\}}$. \
Figure \ref{FigureROMP} shows the general form of a pair in $\mathcal{TC}$,
and that it is uniquely determined by the five parameters $\sigma $, $\tau $%
, $a$, $\xi $ and $\eta $. \ Thus, in what follows we will identify a pair $\mathbf{T}\in \mathcal{TC}$ with the $5$-tuple $\left\langle
\sigma ,\tau ,a,\xi ,\eta \right\rangle $. \ We shall also let $\left[ a,\xi %
\right] $ denote the Berger measure of the subnormal unilateral weighted
shift $W$ whose $0$-th weight is $a$ and with $\xi $ as the Berger measure
of $W|_{\mathcal{L}_{1}}$, where $\mathcal{L}_{1}:=\bigvee \{e_{k}:k\geq 1\}$%
. \ For instance, in Figure \ref{FigureROMP}(i) the Berger measure for the first
row is $\left[ a,\xi \right] $, and for the second row is $\left[ \frac{a \beta_1}{y_1},\xi \right] $. \ Finally, we shall let $z_{j}\equiv
z_{j}(\eta )$ denote the $j$-th weight of the unilateral weighted shift
whose Berger measure is $\eta $; that is, $\operatorname{shift}(z_{0},z_{1},\cdots )$
has Berger measure $\eta $.

\begin{lemma}
\label{LemmaStructure} \ Let $\mathbf{T}\in \mathcal{TC}$, and let 
$m,n\geq 1$. \ Then $\mathbf{T}^{(m,n)}\in \bigoplus \mathcal{TC}$.
\end{lemma}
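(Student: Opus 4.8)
The plan is to exploit the orthogonal decomposition recorded in (\ref{hpq}): since each $\mathcal{H}_{(p,q)}^{(m,n)}$ reduces $T_1^m$ and $T_2^n$, we have $\mathbf{T}^{(m,n)} = \bigoplus_{p=0}^{m-1}\bigoplus_{q=0}^{n-1}\mathbf{T}^{(m,n)}|_{\mathcal{H}_{(p,q)}^{(m,n)}}$, and it suffices to prove that each summand lies in $\mathcal{TC}$. First I would re-index each $\mathcal{H}_{(p,q)}^{(m,n)}$ by $f_{(\ell,k)} := e_{(m\ell+p,\,nk+q)}$ ($\ell,k\geq 0$), which identifies it with $\ell^2(\mathbb{Z}_+^2)$ and exhibits the restriction as a commuting $2$-variable weighted shift with positive weights
\[
A_{(\ell,k)} = \prod_{j=0}^{m-1}\alpha_{(m\ell+p+j,\,nk+q)}, \qquad B_{(\ell,k)} = \prod_{j=0}^{n-1}\beta_{(m\ell+p,\,nk+q+j)}.
\]
That this summand belongs to $\mathfrak{H}_0$ is immediate: $T_1$ and $T_2$ are subnormal, hence so are their powers $T_1^m$, $T_2^n$, and the restriction of a subnormal operator to a reducing (hence invariant) subspace is again subnormal.

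The heart of the proof is to check that the core of each summand is of tensor form. The core is the restriction to the indices $\ell,k\geq 1$, which correspond to original indices $k_1 = m\ell+p\geq m\geq 1$ and $k_2 = nk+q\geq n\geq 1$; thus the core of the summand sits entirely inside the core of $\mathbf{T}$. Since $c(\mathbf{T})$ is of tensor form, on the region $k_1,k_2\geq 1$ the weights decouple as $\alpha_{(k_1,k_2)}=\alpha_{k_1}$ and $\beta_{(k_1,k_2)}=\beta_{k_2}$, where $\{\alpha_{k_1}\}_{k_1\geq 1}$ and $\{\beta_{k_2}\}_{k_2\geq 1}$ are the weight sequences of the subnormal shifts $W_\xi$ and $W_\eta$. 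Consequently, for $\ell,k\geq 1$ every factor of $A_{(\ell,k)}$ carries first index $\geq 1$, so $A_{(\ell,k)}=\prod_{j=0}^{m-1}\alpha_{m\ell+p+j}$ depends on $\ell$ alone, and similarly $B_{(\ell,k)}=\prod_{j=0}^{n-1}\beta_{nk+q+j}$ depends on $k$ alone. Hence on the core the horizontal weights are independent of the vertical index and vice versa, which is exactly the tensor-form requirement.

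To finish, I would identify the two one-variable factors of this tensor core and confirm their subnormality. The $\ell$-factor has weights that are products of $m$ consecutive weights of $W_\xi$; this is (a tail of) one of the reducing summands $W_{\xi(m:i)}$ of $W_\xi^m \cong \bigoplus_{i=0}^{m-1}W_{\xi(m:i)}$, which is subnormal because $W_\xi$ is, and the passage to the tail preserves subnormality by Lemma \ref{restrictionBerger}. An identical argument with $W_\eta$ handles the $k$-factor. Therefore $c(\mathbf{T}^{(m,n)}|_{\mathcal{H}_{(p,q)}^{(m,n)}})$ is genuinely of tensor form, each summand lies in $\mathcal{TC}$, and summing over $(p,q)$ yields $\mathbf{T}^{(m,n)}\in\bigoplus\mathcal{TC}$.

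I expect the only delicate point to be purely a matter of bookkeeping: one must verify that every weight product entering the core of a summand stays inside the core of $\mathbf{T}$, so that the tensor decoupling $\alpha_{(k_1,k_2)}=\alpha_{k_1}$, $\beta_{(k_1,k_2)}=\beta_{k_2}$ may be applied. Once the core is described by $\ell,k\geq 1$ this is immediate from $m\ell+p\geq m\geq 1$ and $nk+q\geq n\geq 1$; had we instead tried to analyze the full summand (including indices $\ell=0$ or $k=0$) the weights would genuinely involve the non-tensor boundary data $x_i$, $y_j$ and $a$, and the clean decoupling would fail.
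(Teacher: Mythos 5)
Your proof is correct, and it starts from the same orthogonal decomposition (\ref{hpq}) as the paper's, but the way you certify that each summand $\mathbf{T}^{(m,n)}|_{\mathcal{H}_{(p,q)}^{(m,n)}}$ belongs to $\mathcal{TC}$ is genuinely different. The paper first reduces to the case $m=1$ (via $\mathbf{T}^{(m,n)}=(\mathbf{T}^{(m,1)})^{(1,n)}$ and the symmetry of $\mathcal{TC}$ in the two variables) and then, by diagram chasing in Figure \ref{double}, computes the five parameters of every summand explicitly, e.g. $\sigma ^{(0,0)}=\sigma $, $\sigma ^{(0,1)}=\left[ a,\xi \right] $, $\tau ^{(0,q)}=\tau _{(n,q)}$, $a^{(0,q)}=\frac{az_{0}\cdots z_{n-2+q}}{y_{1}\cdots y_{n-1+q}}$, $\xi ^{(0,q)}=\xi $, $\eta ^{(0,0)}=\eta _{(n,n-1)}$. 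You instead treat general $(m,n)$ at once and verify the defining properties of $\mathcal{TC}$ abstractly: membership in $\mathfrak{H}_{0}$ follows because powers of subnormals and their restrictions to reducing subspaces are subnormal, and tensor form of the core follows from your key observation that every weight entering the core of a summand has both original indices at least $1$, hence lies in the region where the weights of $\mathbf{T}$ decouple; the resulting one-variable factors are tails of the shifts $W_{\xi (m:i)}$ and $W_{\eta (n:j)}$, hence subnormal by Lemma \ref{powermeasure} combined with Lemma \ref{restrictionBerger}. Your route is shorter, avoids the reduction to $m=1$, and is arguably cleaner as a proof of the lemma as stated. What the paper's heavier bookkeeping buys is the explicit identification of the tuples $\left\langle \sigma ^{(0,q)},\tau ^{(0,q)},a^{(0,q)},\xi ^{(0,q)},\eta ^{(0,q)}\right\rangle $ themselves, which is not wasted effort: those formulas are precisely the raw material consumed by the proof of Proposition \ref{connection}, where $(\psi ^{(0,1)},\varphi ^{(0,0)})$ is related to $(\psi ,\varphi )$; with your softer argument, that computation would still have to be carried out there.
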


\begin{proof}
First recall from (\ref{hpq}) that $\ell ^{2}(\mathbb{Z}_{+}^{2})$ can be
written as an orthogonal direct sum $\oplus _{p=0}^{m-1}\oplus _{q=0}^{n-1}%
\mathcal{H}_{(p,q)}^{(m,n)}$, where for $p=0,1,\cdots,m-1$ and $%
q=0,1,\cdots,n-1$ we have $\mathcal{H}_{(p,q)}^{(m,n)}:=\vee \{e_{(m\ell
+p,nk+q)}:k,\ell \geq 0\}$. Now write $\mathbf{T}\equiv
\left\langle \sigma ,\tau ,a,\xi ,\eta \right\rangle $. \ We shall establish
that 
\begin{equation}
\mathbf{T}^{(m,n)}=\bigoplus_{p=0}^{m-1}\bigoplus_{q=0}^{n-1}\left\langle \sigma
^{(p,q)},\tau ^{(p,q)},a^{(p,q)},\xi ^{(p,q)},\eta ^{(p,q)}\right\rangle ,
\label{structureequation}
\end{equation}%
where $\left\langle \sigma ^{(p,q)},\tau ^{(p,q)},a^{(p,q)},\xi
^{(p,q)},\eta ^{(p,q)}\right\rangle$ is the 5-tuple associated to the
restriction of $\mathbf{T}^{(m,n)}$ to the reducing subspace $%
\mathcal{H}_{(p,q)}^{(m,n)}$. \ Since $\mathbf{T}^{(m,n)}=(\mathbf{T}^{(m,1)})^{(1,n)}$, and since $\mathbf{T}^{(m,1)}\in \bigoplus \mathcal{TC}$ if and only if $\mathbf{T}^{(1,m)}\in \bigoplus \mathcal{TC}$, it suffices to prove (\ref%
{structureequation}) in the case $m=1$. \ The proof is simple but a bit
tedious, and it entails careful diagram chasing in Figure \ref{double}. \
Visual inspection of that Figure reveals that when $m=1$ we have $\sigma
^{(0,0)}=\sigma $, $\sigma ^{(0,1)}=\left[ a,\xi \right] $, $\cdots $ , $%
\sigma ^{(0,n-1)}=\left[ \frac{az_{0}\cdots z_{n-3}}{y_{1}\cdots y_{n-2}}%
,\xi \right] \;(n\geq 3)$; moreover, $\tau ^{(0,q)}=\tau _{(n,q)}$, where
the latter notation was introduced in Lemma \ref{powermeasure}. \ Still
looking at Figure \ref{FigureROMP}, we observe that $a^{(0,q)}=\frac{%
az_{0}\cdots z_{n-2+q}}{y_{1}\cdots y_{n-1+q}}$ and that $\xi ^{(0,q)}=\xi$,
$\eta ^{(0,0)}=\eta_{(n,n-1)}$ and $\eta^{(0,q)}=(\eta_{1}) _{(n,q-1)} \; (q \geq 1)$. \ This completes the proof. \qed
\end{proof}

\begin{corollary}
For $m,n\geq 1$ we have $(\bigoplus \mathcal{TC})^{(m,n)}\subseteq \bigoplus 
\mathcal{TC}$.
\end{corollary}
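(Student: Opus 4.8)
The plan is to reduce the statement to Lemma \ref{LemmaStructure} by exploiting the fact that the power operation $(m,n)\mapsto \mathbf{T}^{(m,n)}$ distributes over orthogonal direct sums. First I would unwind the definition: a pair $\mathbf{S}\equiv (S_{1},S_{2})\in \bigoplus \mathcal{TC}$ means that there is an orthogonal decomposition $\mathcal{H}=\bigoplus_{i}\mathcal{H}_{i}$ of the ambient space, with each $\mathcal{H}_{i}$ reducing both $S_{1}$ and $S_{2}$, together with pairs $\mathbf{T}_{i}\equiv (T_{1,i},T_{2,i})\in \mathcal{TC}$ acting on $\mathcal{H}_{i}$ such that $S_{1}=\bigoplus_{i}T_{1,i}$ and $S_{2}=\bigoplus_{i}T_{2,i}$.

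The key elementary observation is that, since each $\mathcal{H}_{i}$ reduces both $S_{1}$ and $S_{2}$, it also reduces every power $S_{1}^{m}$ and $S_{2}^{n}$, and moreover $S_{1}^{m}=\bigoplus_{i}T_{1,i}^{m}$ and $S_{2}^{n}=\bigoplus_{i}T_{2,i}^{n}$. Consequently
\[
\mathbf{S}^{(m,n)}=(S_{1}^{m},S_{2}^{n})=\bigoplus_{i}(T_{1,i}^{m},T_{2,i}^{n})=\bigoplus_{i}\mathbf{T}_{i}^{(m,n)}.
\]
By Lemma \ref{LemmaStructure}, each summand satisfies $\mathbf{T}_{i}^{(m,n)}\in \bigoplus \mathcal{TC}$; that is, $\mathbf{T}_{i}^{(m,n)}$ is itself an orthogonal direct sum of finitely many pairs lying in $\mathcal{TC}$ (indeed, of the $mn$ pairs indexed by $(p,q)$ appearing in \eqref{structureequation}).

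Finally I would flatten the nested direct sum: a direct sum over $i$ of direct sums over $(p,q)$ of pairs in $\mathcal{TC}$ is, after relabeling the index set, a single orthogonal direct sum of pairs in $\mathcal{TC}$. Hence $\mathbf{S}^{(m,n)}\in \bigoplus \mathcal{TC}$, as desired. The argument is purely structural, so I do not anticipate a genuine obstacle; the only points requiring care are verifying that the reducing decomposition for $\mathbf{S}$ remains a reducing decomposition for the powers $(S_{1}^{m},S_{2}^{n})$ (so that the power really does split as the direct sum of the powers of the summands), and that associativity of the orthogonal direct sum permits collapsing the two-level sum into one. Both are routine.
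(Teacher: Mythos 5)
Your proposal is correct and coincides with the paper's intended argument: the paper states this corollary without proof, as an immediate consequence of Lemma \ref{LemmaStructure}, and your write-up simply makes explicit the routine steps the authors leave implicit (powers distribute over orthogonal direct sums since reducing subspaces for $(S_1,S_2)$ reduce $(S_1^m,S_2^n)$, the lemma applies to each summand, and the nested direct sum flattens by relabeling). There is no gap; this is exactly the right reduction.
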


We now restate the main result in \cite{ROMP}. \ First, recall that if $%
\tau $ is the Berger measure of $\operatorname{shift}(y_{0},y_{1},\cdots )$, we
denote by $\tau _{1}$ the Berger measure of $\operatorname{shift}(y_{1},y_{2},\cdots
)$. \ As described in Lemma \ref{restrictionBerger}, we know that $d\tau
_{1}(t)\equiv \frac{t}{y_{0}^{2}}d\tau (t)$.

\begin{lemma}
\label{LemmaROMP}(\cite[Theorem 2.3]{ROMP}) \ Let $\mathbf{T}\equiv \left\langle \sigma ,\tau ,a,\xi ,\eta \right\rangle \in \mathcal{TC}$ be as in Figure \ref{FigureROMP2}(i) and let 
\begin{eqnarray}
\psi &:&=\tau _{1}-a^{2}\left\| \frac{1}{s}\right\| _{L^{1}(\xi )}\eta
\label{formulas} \\
\varphi &:&=\sigma -y_{0}^{2}\left\| \frac{1}{t}\right\| _{L^{1}(\psi
)}\delta _{0}-a^{2}y_{0}^{2}\left\| \frac{1}{t}\right\| _{L^{1}(\eta )}\frac{%
\xi }{s},  \notag
\end{eqnarray}%
where $y_{0}\equiv \beta _{00}:=\sqrt{\int t\;d\tau (t)}$. \ Then $\mathbf{T}$ is subnormal if and only if $\psi \geq 0$ and $\varphi
\geq 0$.
\end{lemma}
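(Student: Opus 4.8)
The plan is to obtain the criterion by peeling the boundary of $\mathbb{Z}_+^2$ away from the tensor-form core in two stages, each governed by the Subnormal Backward Extension Theorem (Lemma \ref{backext}). First I would record the base case: since $c(\mathbf{T})\cong(I\otimes W_\xi,W_\eta\otimes I)$ with $W_\xi,W_\eta$ subnormal with Berger measures $\xi$ and $\eta$, the core is subnormal with Berger measure the product $\xi\times\eta$. This single fact about tensor products of subnormal shifts feeds the entire argument, because products interact cleanly with the marginals and extremal measures that appear in Lemma \ref{backext}.

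Stage 1 (adjoin the left column). The subspace $\mathcal{M}_1=\mathcal{H}^{(1,1)}_{(0,1)}$ (all rows $k_2\geq1$) consists of the core together with the column $k_1=0,\ k_2\geq1$; the $T_2$-shift down that column is $\operatorname{shift}(y_1,y_2,\cdots)$ with measure $\tau_1$, and the $T_1$-weight joining $(0,1)$ to $(1,1)$ is $a$. Thus $\mathbf{T}|_{\mathcal{M}_1}$ is exactly a backward extension of the core in the $T_1$-direction, so I would apply Lemma \ref{backext} with the roles of the two variables interchanged, taking core measure $\xi\times\eta$, base weight $a$, and outer measure $\tau_1$. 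The key computation is that the $t$-marginal of $(\xi\times\eta)_{ext}$ (extremal in $s$) equals $\eta$, since $\int \frac{1}{s\,\|1/s\|_{L^1(\xi)}}\,d\xi(s)=1$. Condition (iii) then reads $a^2\|1/s\|_{L^1(\xi)}\,\eta\leq\tau_1$, i.e. $\psi\geq0$, while conditions (i)--(ii) (namely $1/s\in L^1(\xi)$ and $a^2\|1/s\|_{L^1(\xi)}\leq1$) are subsumed by $\psi\geq0$ upon comparing total masses. Hence $\mathbf{T}|_{\mathcal{M}_1}$ is subnormal iff $\psi\geq0$, and the reconstruction formula (\ref{reconstruct}) yields
\[
d\mu_{\mathcal{M}_1}(s,t)=\frac{a^2}{s}\,d\xi(s)\,d\eta(t)+d\psi(t)\,d\delta_0(s).
\]

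Stage 2 (adjoin the bottom row). Now I would apply Lemma \ref{backext} verbatim to $\mathbf{T}$, with $\mathcal{M}=\mathcal{M}_1$, $\mu_{\mathcal{M}}=\mu_{\mathcal{M}_1}$ from Stage 1, outer measure $\sigma$ (the Berger measure of the bottom row $\operatorname{shift}(x_0,x_1,\cdots)$), and base weight $\beta_{00}=y_0$ with $y_0^2=\int t\,d\tau$. A direct computation gives $\|1/t\|_{L^1(\mu_{\mathcal{M}_1})}=a^2\|1/s\|_{L^1(\xi)}\|1/t\|_{L^1(\eta)}+\|1/t\|_{L^1(\psi)}$, and the $s$-marginal of $(\mu_{\mathcal{M}_1})_{ext}$ is a scalar multiple of $\frac{a^2\|1/t\|_{L^1(\eta)}}{s}\,d\xi(s)+\|1/t\|_{L^1(\psi)}\,d\delta_0(s)$. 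Substituting into condition (iii) and clearing the common factor $y_0^2\|1/t\|_{L^1(\mu_{\mathcal{M}_1})}$ turns it into precisely $\varphi\geq0$, while (i)--(ii) are once more absorbed into $\varphi\geq0$ by a total-mass comparison against the probability measure $\sigma$. Combining the two stages gives $\mathbf{T}$ subnormal $\iff$ $\psi\geq0$ and $\varphi\geq0$.

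The step I expect to be the main obstacle is the measure bookkeeping in Stage 1: justifying the interchanged-variable version of Lemma \ref{backext} (the stated lemma peels a row in the $T_2$-direction, whereas here a column is peeled in the $T_1$-direction, valid by the symmetry $s\leftrightarrow t$ but requiring care so that the correct marginal is the one compared to $\tau_1$), together with confirming that the core measure is genuinely the product $\xi\times\eta$ so that all the marginal simplifications go through. Once $\mu_{\mathcal{M}_1}$ is identified in the product-plus-atom form displayed above, Stage 2 is a routine if slightly lengthy substitution; the only point to monitor is that every integrability side condition collapses into the two positivity requirements, so that $\psi\geq0$ and $\varphi\geq0$ are not merely necessary but also sufficient.
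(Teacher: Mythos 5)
Your argument is correct, and it is essentially the paper's own route: the present paper states Lemma \ref{LemmaROMP} without proof, importing it from \cite[Theorem 2.3]{ROMP}, where the result is established by exactly this two-stage application of the subnormal backward extension lemma (Lemma \ref{backext}), first in the variable-interchanged form to adjoin the zeroth column to the core, then verbatim to adjoin the bottom row. Your key intermediate computations check out, namely the identification $d\mu_{\mathcal{M}_1}(s,t)=\frac{a^2}{s}\,d\xi(s)\,d\eta(t)+d\psi(t)\,d\delta_0(s)$ and the absorption of the integrability/contractivity conditions (i)--(ii) of Lemma \ref{backext} into $\psi\geq 0$ and $\varphi\geq 0$ by comparing total masses against the probability measures $\tau_1$ and $\sigma$.
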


\setlength{\unitlength}{.85mm} \psset{unit=.85mm} 
\begin{figure}[th]
\begin{center}
\begin{picture}(165,85)

\pspolygon[linestyle=dashed,fillcolor=lightgray,fillstyle=crosshatch*,hatchcolor=white,hatchwidth=0.2pt,hatchsep=0.5pt](20,40)(20,83)(63,83)(63,40)
\psline[linewidth=1.5pt](0,20)(63,20)
\psline[linewidth=1.5pt](0,20)(0,83)
\psline(0,40)(20,40)
\psline[linewidth=0.75pt](20,40)(63,40)
\psline[linewidth=0.75pt](20,40)(20,83)

\put(10,2){$(i)$}

\put(30,10){$\rm{T}_1 \approx \sigma$}
\put(-16,60){$\rm{T}_2\approx \tau$}
\put(25.5,60){$c(\rm{T}_1,\rm{T}_2) \approx \xi \times \eta$}

\put(9,41){\footnotesize{$a$}}


\put(90,2){$(ii)$}

\psline{->}(110,13)(130,13)
\put(120,8){$\rm{T}_1$}
\psline{->}(87,50)(87,70)
\put(81,60){$\rm{T}_2$}

\psline{->}(90,20)(155,20)
\psline(90,40)(153,40)
\psline(90,60)(153,60)
\psline(90,80)(153,80)

\psline{->}(90,20)(90,85)
\psline(110,20)(110,83)
\psline(130,20)(130,83)
\psline(150,20)(150,83)

\put(83,16){\footnotesize{$(0,0)$}}
\put(106,16){\footnotesize{$(1,0)$}}
\put(126,16){\footnotesize{$(2,0)$}}
\put(146,16){\footnotesize{$(3,0)$}}

\put(97,21){\footnotesize{$x$}}
\put(117,21){\footnotesize{$1$}}
\put(137,21){\footnotesize{$1$}}
\put(151,21){\footnotesize{$\cdots$}}

\put(97,41){\footnotesize{$a$}}
\put(117,41){\footnotesize{$1$}}
\put(137,41){\footnotesize{$1$}}
\put(151,41){\footnotesize{$\cdots$}}

\put(97,61){\footnotesize{$a$}}
\put(117,61){\footnotesize{$1$}}
\put(137,61){\footnotesize{$1$}}
\put(151,61){\footnotesize{$\cdots$}}

\put(97,81){\footnotesize{$\cdots$}}
\put(117,81){\footnotesize{$\cdots$}}
\put(137,81){\footnotesize{$\cdots$}}
\put(151,81){\footnotesize{$\cdots$}}

\put(90,28){\footnotesize{$y$}}
\put(90,48){\footnotesize{$1$}}
\put(90,68){\footnotesize{$1$}}
\put(91,81){\footnotesize{$\vdots$}}

\put(110,28){\footnotesize{$1$}}
\put(110,48){\footnotesize{$1$}}
\put(110,68){\footnotesize{$1$}}
\put(111,81){\footnotesize{$\vdots$}}

\put(130,28){\footnotesize{$1$}}
\put(130,48){\footnotesize{$1$}}
\put(130,68){\footnotesize{$1$}}
\put(131,81){\footnotesize{$\vdots$}}

\end{picture}
\end{center}
\caption{Berger measure diagram and weight diagram of the 2-variable weighted shifts in Lemma \ref%
{LemmaROMP} and Example \ref{simpleex}, respectively.}
\label{FigureROMP2}
\end{figure}
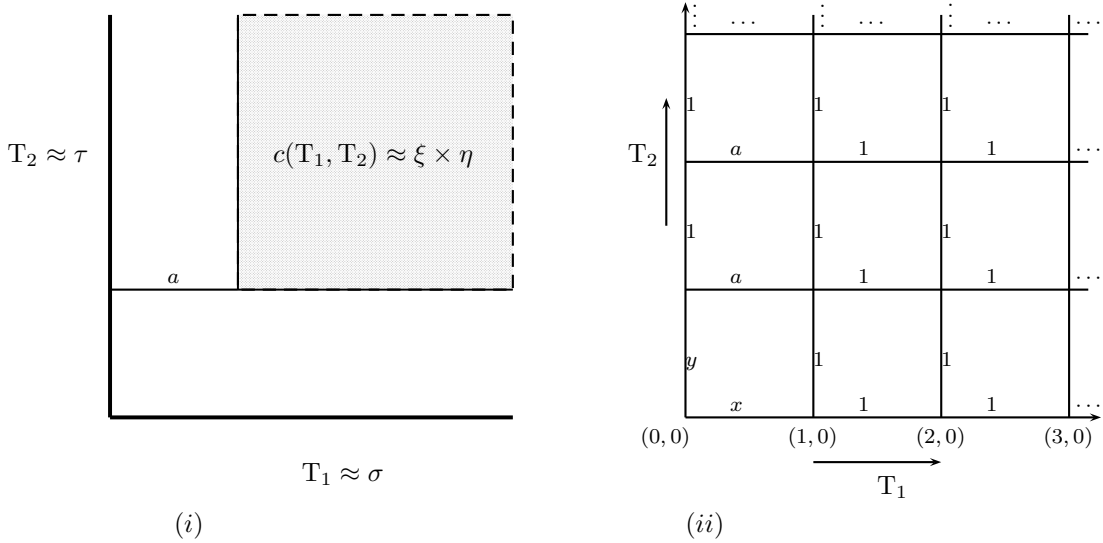

\begin{example}
\label{simpleex}Assume the very simple case of $\mathbf{T}\equiv
\left\langle \sigma ,\tau ,a,\xi ,\eta \right\rangle $, where $\sigma :=%
\left[ x,\delta _{1}\right] $, $\tau :=\left[ y,\delta _{1}\right] $, $0<a<1$%
, $\xi :=\delta _{1}$ and $\eta :=\delta _{1}$ (cf. Figure \ref{FigureROMP2}%
(ii)). \ Then $\psi =\delta _{1}-a^{2}\delta _{1}=(1-a^{2})\delta _{1}$ and $%
\varphi =(1-x^{2})\delta _{0}+x^{2}\delta _{1}-y^{2}(1-a^{2})\delta
_{0}-a^{2}y^{2}\delta _{1}=\{(1-x^{2})-y^{2}(1-a^{2})\}\delta
_{0}+(x^{2}-a^{2}y^{2})\delta _{1}$. \ Thus, $\mathbf{T}$ is
subnormal if and only if $(1-x^{2})-y^{2}(1-a^{2})\geq 0$, a condition
identical to that in \cite[Proposition 2.11]{CuYo1}.
\end{example}

\section{\label{pairs}The pairs $(\protect\psi ,\protect\varphi )$ for $%
\left\langle \protect\sigma ,\protect\tau ,a,\protect\xi ,\protect\eta %
\right\rangle $ and $\left\langle \protect\sigma ^{(p,q)},\protect\tau %
^{(p,q)},a^{(p,q)},\protect\xi ^{(p,q)},\protect\eta ^{(p,q)}\right\rangle $}

In this section we establish a direct relationship between the pair $(\psi
,\varphi )$ associated to $\left\langle \sigma ,\tau ,a,\xi ,\eta
\right\rangle \in \mathcal{TC}$ and some of the pairs $(\psi
^{(p,q)},\varphi ^{(p,q)})$ associated to the direct summands $\left\langle
\sigma ^{(p,q)},\tau ^{(p,q)},a^{(p,q)},\xi ^{(p,q)},\eta
^{(p,q)}\right\rangle $ in $\left\langle \sigma ,\tau ,a,\xi ,\eta
\right\rangle ^{(m,n)}$.

\begin{proposition}
\label{connection}Let $\left\langle \sigma ,\tau ,a,\xi ,\eta \right\rangle
\in \mathcal{TC}$, and let $n\geq 2$. \ Consider the decomposition $%
\left\langle \sigma ,\tau ,a,\xi ,\eta \right\rangle
^{(1,n)}=\bigoplus_{q=0}^{n-1}\left\langle \sigma ^{(0,q)},\tau
^{(0,q)},a^{(0,q)},\xi ^{(0,q)},\eta ^{(0,q)}\right\rangle $, and let $(\psi
,\varphi )$ (resp. $(\psi ^{(0,q)},\varphi ^{(0,q)})$) be the associated
pair in Lemma \ref{LemmaROMP}. \ Then \newline
(i) \ $\psi ^{(0,1)}\geq 0\Longleftrightarrow \psi \geq 0$; and\newline
(ii) \ $\varphi ^{(0,0)}\geq 0\Longleftrightarrow \varphi \geq 0$.
\end{proposition}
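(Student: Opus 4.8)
The plan is to obtain explicit expressions for the two summand measures $\psi^{(0,1)}$ and $\varphi^{(0,0)}$ in terms of the five parameters $\sigma,\tau,a,\xi,\eta$ of $\mathbf{T}$, and then read off the two equivalences. I work with the $(1,n)$-decomposition of the statement, I write $\pi_n(t):=t^n$ for the homeomorphism of $[0,\infty)$ that implements the passage to the $n$-th power, and I use Lemma \ref{powermeasure} for the power measures $(\cdot)_{(n,i)}$ and Lemma \ref{restrictionBerger} for the one-step restrictions $(\cdot)_1$, so that every measure attached to a summand (which lies in $\mathcal{TC}$ by Lemma \ref{LemmaStructure}, hence carries a well-defined pair via Lemma \ref{LemmaROMP}) becomes an explicit density against a push-forward $\mu\circ\pi_n^{-1}$ of $\tau$ or $\eta$.

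For (i), I would substitute into $\psi^{(0,1)}=(\tau^{(0,1)})_1-(a^{(0,1)})^2\|\frac{1}{s}\|_{L^1(\xi)}\eta^{(0,1)}$ the summand data $\xi^{(0,1)}=\xi$, $a^{(0,1)}=\frac{az_0\cdots z_{n-1}}{y_1\cdots y_n}$, together with the densities produced by Lemmas \ref{restrictionBerger} and \ref{powermeasure},
\begin{equation*}
d(\tau^{(0,1)})_1(t)=\tfrac{t}{(y_1\cdots y_n)^2}\,d(\tau_1\circ\pi_n^{-1})(t),\qquad d\eta^{(0,1)}(t)=\tfrac{t}{(z_0\cdots z_{n-1})^2}\,d(\eta\circ\pi_n^{-1})(t).
\end{equation*}
The moment product $(z_0\cdots z_{n-1})^2$ appearing in $\eta^{(0,1)}$ cancels exactly against the same factor in $(a^{(0,1)})^2$; after this cancellation both terms carry the common density $\frac{t}{(y_1\cdots y_n)^2}$ and the common push-forward by $\pi_n$, so linearity of push-forward yields the key identity
\begin{equation*}
d\psi^{(0,1)}(t)=\frac{t}{(y_1\cdots y_n)^2}\,d\!\left(\psi\circ\pi_n^{-1}\right)(t).
\end{equation*}
Since $\pi_n$ is a homeomorphism of $[0,\infty)$, the map $\rho\mapsto\rho\circ\pi_n^{-1}$ is a positivity-preserving bijection, and multiplication by the density $\frac{t}{(y_1\cdots y_n)^2}$, which is strictly positive on $(0,\infty)$, both preserves and reflects positivity there. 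This settles (i) on $(0,\infty)$.

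The one genuine obstacle is the point $t=0$: the factor $t$ annihilates any atom at the origin, so $\psi^{(0,1)}\geq0$ a priori controls $\psi$ only on $(0,\infty)$. I would close this gap using $\mathbf{T}\in\mathcal{TC}$. Subnormality of $T_2$ forces each column shift to be subnormal, in particular $\operatorname{shift}(\beta_{(1,0)},\beta_1,\beta_2,\cdots)$, which is a one-step backward extension of $W_\eta$; the backward extension criterion then requires $\frac{1}{t}\in L^1(\eta)$, whence $\eta(\{0\})=0$. Since $\tau_1$ also has no atom at the origin (its density $\frac{t}{y_0^2}\,d\tau$ vanishes there), we obtain $\psi(\{0\})=-a^2\|\frac{1}{s}\|_{L^1(\xi)}\eta(\{0\})=0$. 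Thus $\psi\geq0$ on $(0,\infty)$ together with $\psi(\{0\})=0$ gives $\psi\geq0$ everywhere, proving (i).

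For (ii) I expect the cleaner outcome that $\varphi^{(0,0)}=\varphi$ exactly, making the equivalence immediate. Here $\sigma^{(0,0)}=\sigma$ and $\xi^{(0,0)}=\xi$, so the leading and $\frac{\xi}{s}$ terms already have the correct shape; the task is to check that the two scalar coefficients collapse. Writing $Y:=y_0\cdots y_{n-1}$, so that $(y_0^{(0,0)})^2=\gamma_1(\tau_{(n,0)})=Y^2$, short computations with Lemmas \ref{restrictionBerger} and \ref{powermeasure} give $\|\frac{1}{t}\|_{L^1(\eta^{(0,0)})}=\frac{1}{(z_0\cdots z_{n-2})^2}\|\frac{1}{t}\|_{L^1(\eta)}$ and $\int\frac{1}{t}\,d(\tau^{(0,0)})_1=\frac{1}{Y^2}$ (the latter using that $\tau$ is a probability measure). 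Feeding these into the definition of $\varphi^{(0,0)}$ and cancelling the moment products $(y_1\cdots y_{n-1})^2$ and $(z_0\cdots z_{n-2})^2$, the coefficient $(a^{(0,0)})^2(y_0^{(0,0)})^2\|\frac{1}{t}\|_{L^1(\eta^{(0,0)})}$ reduces to $a^2y_0^2\|\frac{1}{t}\|_{L^1(\eta)}$, while the atomic coefficient $(y_0^{(0,0)})^2\|\frac{1}{t}\|_{L^1(\psi^{(0,0)})}$ reduces to $y_0^2\|\frac{1}{t}\|_{L^1(\psi)}=1-a^2y_0^2\|\frac{1}{s}\|_{L^1(\xi)}\|\frac{1}{t}\|_{L^1(\eta)}$ — precisely the coefficients appearing in $\varphi$. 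Hence $\varphi^{(0,0)}=\varphi$, and (ii) follows at once. The only real risk in the whole argument is bookkeeping with the normalizations $\gamma_i(\tau),\gamma_i(\eta)$ so that these cancellations are exact; the conceptual content lies entirely in the push-forward identity for $\psi$ and the atom-at-zero argument.
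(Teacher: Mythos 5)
Your proof is correct and follows essentially the same route as the paper's: the same summand data from Lemma \ref{LemmaStructure}, the same density computations via Lemmas \ref{powermeasure} and \ref{restrictionBerger}, the same key identity $d\psi ^{(0,1)}(t)=\frac{t}{y_{1}^{2}\cdots y_{n}^{2}}\,d\psi (t^{\frac{1}{n}})$ for (i), and the same conclusion $\varphi ^{(0,0)}=\varphi $ for (ii). Your only departure is the explicit atom-at-zero argument ($\eta (\{0\})=0$ because subnormality of the column shift, a one-step backward extension of $W_{\eta }$, forces $\frac{1}{t}\in L^{1}(\eta )$), a legitimate extra detail: the paper's proof simply asserts that the equivalence in (i) ``readily follows'' from the density identity, leaving exactly that point implicit.
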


\setlength{\unitlength}{.91mm} \psset{unit=.91mm} 
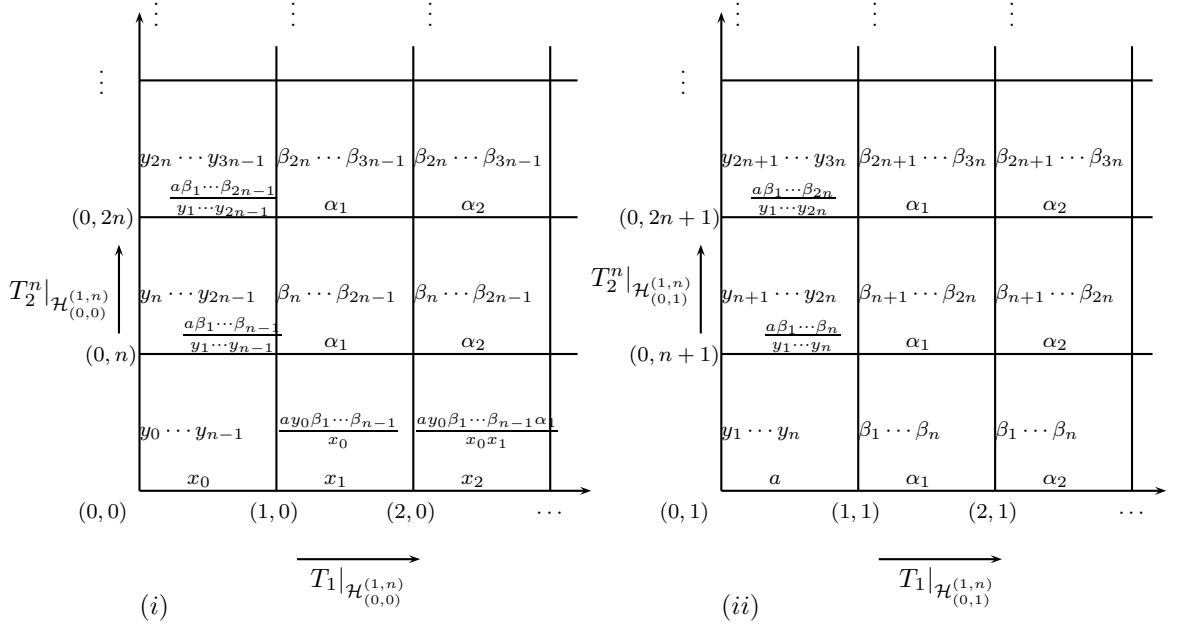
\begin{figure}[th]
\begin{center}
\begin{picture}(160,90)

\psline{->}(10,20)(76,20)
\psline(10,40)(74,40)
\psline(10,60)(74,60)
\psline(10,80)(74,80)
\psline{->}(10,20)(10,90)
\psline(30,20)(30,85)
\psline(50,20)(50,85)
\psline(70,20)(70,85)

\put(1,16){\footnotesize{$(0,0)$}}
\put(26,16){\footnotesize{$(1,0)$}}
\put(46,16){\footnotesize{$(2,0)$}}
\put(68,16){\footnotesize{$\cdots$}}

\put(17,21){\footnotesize{$x_{0}$}}
\put(37,21){\footnotesize{$x_{1}$}}
\put(57,21){\footnotesize{$x_{2}$}}

\put(16,42){\footnotesize{$\frac{a\beta_{1}\cdots \beta_{n-1}}{y_{1}\cdots y_{n-1}}$}}
\put(37,41){\footnotesize{$\alpha_{1}$}}
\put(57,41){\footnotesize{$\alpha_{2}$}}

\put(14,62){\footnotesize{$\frac{a\beta_{1}\cdots \beta_{2n-1}}{y_{1}\cdots y_{2n-1}}$}}
\put(37,61){\footnotesize{$\alpha_{1}$}}
\put(57,61){\footnotesize{$\alpha_{2}$}}

\psline{->}(33,10)(51,10)
\put(35,6){$T_1|_{\mathcal{H}_{(0,0)}^{(1,n)}}$}

\put(2,39){\footnotesize{$(0,n)$}}
\put(0,59){\footnotesize{$(0,2n)$}}
\put(4,78){\footnotesize{$\vdots$}}

\psline{->}(7,43)(7,56)
\put(-9,48){$T_{2}^n|_{\mathcal{H}_{(0,0)}^{(1,n)}}$}

\put(10,28){\footnotesize{$y_{0} \cdots y_{n-1}$}}
\put(10,48){\footnotesize{$y_{n} \cdots y_{2n-1}$}}
\put(10,68){\footnotesize{$y_{2n} \cdots y_{3n-1}$}}
\put(12,88){\footnotesize{$\vdots$}}

\put(30,28){\footnotesize{$\frac{ay_{0}\beta_{1}\cdots\beta_{n-1}}{x_{0}}$}}
\put(30,48){\footnotesize{$\beta_{n}\cdots\beta_{2n-1}$}}
\put(30,68){\footnotesize{$\beta_{2n}\cdots\beta_{3n-1}$}}
\put(32,88){\footnotesize{$\vdots$}}

\put(50,28){\footnotesize{$\frac{ay_{0}\beta_{1}\cdots\beta_{n-1}\alpha_{1}}{x_{0}x_{1}}$}}
\put(50,48){\footnotesize{$\beta_{n}\cdots\beta_{2n-1}$}}
\put(50,68){\footnotesize{$\beta_{2n}\cdots\beta_{3n-1}$}}
\put(52,88){\footnotesize{$\vdots$}}

\put(10,2){$(i)$}


\put(95,2){$(ii)$}

\psline{->}(95,20)(160,20)
\psline(95,40)(158,40)
\psline(95,60)(158,60)
\psline(95,80)(158,80)
\psline{->}(95,20)(95,90)
\psline(115,20)(115,85)
\psline(135,20)(135,85)
\psline(155,20)(155,85)

\put(86,16){\footnotesize{$(0,1)$}}
\put(111,16){\footnotesize{$(1,1)$}}
\put(131,16){\footnotesize{$(2,1)$}}
\put(153,16){\footnotesize{$\cdots$}}

\put(102,21){\footnotesize{$a$}}
\put(122,21){\footnotesize{$\alpha_{1}$}}
\put(142,21){\footnotesize{$\alpha_{2}$}}

\put(101,42){\footnotesize{$\frac{a\beta_{1}\cdots\beta_{n}}{y_{1}\cdots y_{n}}$}}
\put(122,41){\footnotesize{$\alpha_{1}$}}
\put(142,41){\footnotesize{$\alpha_{2}$}}

\put(99,62){\footnotesize{$\frac{a\beta_{1}\cdots\beta_{2n}}{y_{1}\cdots y_{2n}}$}}
\put(122,61){\footnotesize{$\alpha_{1}$}}
\put(142,61){\footnotesize{$\alpha_{2}$}}

\psline{->}(118,10)(136,10)
\put(121,6){$T_1|_{\mathcal{H}_{(0,1)}^{(1,n)}}$}

\put(81.5,39){\footnotesize{$(0,n+1)$}}
\put(80,59){\footnotesize{$(0,2n+1)$}}
\put(89,78){\footnotesize{$\vdots$}}

\psline{->}(92,43)(92,56)
\put(76,50){$T_{2}^n|_{\mathcal{H}_{(0,1)}^{(1,n)}}$}

\put(95,28){\footnotesize{$y_{1}\cdots y_{n}$}}
\put(95,48){\footnotesize{$y_{n+1}\cdots y_{2n}$}}
\put(95,68){\footnotesize{$y_{2n+1}\cdots y_{3n}$}}
\put(97,88){\footnotesize{$\vdots$}}

\put(115,28){\footnotesize{$\beta_{1}\cdots\beta_{n}$}}
\put(115,48){\footnotesize{$\beta_{n+1}\cdots\beta_{2n}$}}
\put(115,68){\footnotesize{$\beta_{2n+1}\cdots\beta_{3n}$}}
\put(117,88){\footnotesize{$\vdots$}}

\put(135,28){\footnotesize{$\beta_{1}\cdots\beta_{n}$}}
\put(135,48){\footnotesize{$\beta_{n+1}\cdots\beta_{2n}$}}
\put(135,68){\footnotesize{$\beta_{2n+1}\cdots\beta_{3n}$}}
\put(137,88){\footnotesize{$\vdots$}}

\end{picture}
\end{center}
\caption{Weight diagrams of $\mathbf{T}^{(1,n)}|_{%
\mathcal{H}_{(0,0)}^{(1,n)}}$ and $\mathbf{T}^{(1,n)}|_{\mathcal{H}_{(0,1)}^{(1,n)}}$.}
\label{double}
\end{figure}

\begin{proof}
We refer the reader to Figure \ref{double}. \ By Lemma \ref{LemmaStructure},
we have $\sigma ^{(0,0)}=\sigma $, $\sigma ^{(0,1)}=\left[ a,\xi \right] $, $%
\tau ^{(0,0)}=\tau _{(n,0)}$, $\tau ^{(0,1)}=\tau _{(n,1)}$, $a^{(0,0)}=%
\frac{az_{0}\cdots z_{n-2}}{y_{1}\cdots y_{n-1}}$, $a^{(0,1)}=\frac{%
az_{0}\cdots z_{n-1}}{y_{1}\cdots y_{n}}$, $\xi ^{(0,0)}=\xi ^{(0,1)}=\xi $, 
$\eta ^{(0,0)}=\eta _{(n,n-1)}$ and $\eta ^{(0,1)}=(\eta
_{(n,0)})_{1}$. \ Then 
\begin{eqnarray}
\psi ^{(0,1)} &=&(\tau ^{(0,1)})_{1}-(a^{(0,1)})^{2}\left\| \frac{1}{s}%
\right\| _{L^{1}(\xi ^{(0,1)})}\eta ^{(0,1)}  \label{eq1} \\
&=&(\tau _{(n,1)})_{1}-\frac{a^{2}z_{0}^{2}\cdots z_{n-1}^{2}}{%
y_{1}^{2}\cdots y_{n}^{2}}\left\| \frac{1}{s}\right\| _{L^{1}(\xi )}(\eta
_{(n,0)})_{1}.  \notag
\end{eqnarray}%
We now calculate $(\tau _{(n,1)})_{1}$ and $(\eta _{(n,0)})_{1}$. \ From
Lemma \ref{powermeasure} we know that 
\begin{equation*}
d\tau _{(n,1)}(t)=\frac{t^{\frac{1}{n}}}{y_{0}^{2}}d\tau (t^{\frac{1}{n}}),
\end{equation*}%
so that 
\begin{equation*}
d(\tau _{(n,1)})_{1}(t)=\frac{t}{y_{1}^{2}\cdots y_{n}^{2}}d\tau _{(n,1)}(t)=%
\frac{t^{1+\frac{1}{n}}}{y_{0}^{2}y_{1}^{2}\cdots y_{n}^{2}}d\tau (t^{\frac{1%
}{n}})\text{ (by Lemma \ref{restrictionBerger}).}
\end{equation*}%
On the other hand, and again using Lemma \ref{powermeasure}, we have 
\begin{equation*}
d\eta _{(n,0)}(t)=d\eta (t^{\frac{1}{n}}),
\end{equation*}%
so that 
\begin{equation*}
d(\eta _{(n,0)})_{1}(t)=\frac{t}{z_{0}^{2}\cdots z_{n-1}^{2}}d\eta
_{(n,0)}(t)=\frac{t}{z_{0}^{2}\cdots z_{n-1}^{2}}d\eta (t^{\frac{1}{n}})%
\text{ (again by Lemma \ref{restrictionBerger}).}\ 
\end{equation*}%
It follows from (\ref{eq1}) that 
\begin{eqnarray*}
d\psi ^{(0,1)}(t) &=&\frac{t^{1+\frac{1}{n}}}{y_{0}^{2}y_{1}^{2}\cdots
y_{n}^{2}}d\tau (t^{\frac{1}{n}})-\frac{a^{2}z_{0}^{2}\cdots z_{n-1}^{2}}{%
y_{1}^{2}\cdots y_{n}^{2}}\left\| \frac{1}{s}\right\| _{L^{1}(\xi )}\cdot 
\frac{t}{z_{0}^{2}\cdots z_{n-1}^{2}}d\eta (t^{\frac{1}{n}}) \\
&=&\frac{t}{y_{1}^{2}\cdots y_{n}^{2}}\{\frac{t^{\frac{1}{n}}}{y_{0}^{2}}%
d\tau (t^{\frac{1}{n}})-a^{2}\left\| \frac{1}{s}\right\| _{L^{1}(\xi )}d\eta
(t^{\frac{1}{n}})\} \\
&=&\frac{t}{y_{1}^{2}\cdots y_{n}^{2}}\{d\tau _{1}(t^{\frac{1}{n}%
})-a^{2}\left\| \frac{1}{s}\right\| _{L^{1}(\xi )}d\eta (t^{\frac{1}{n}})\}
\\
&=&\frac{t}{y_{1}^{2}\cdots y_{n}^{2}}d\psi (t^{\frac{1}{n}}).
\end{eqnarray*}%
It now readily follows that $\psi ^{(0,1)}\geq 0$ if and only if $\psi \geq
0 $, which establishes (i). \ 

To prove (ii), we begin by calculating $\psi ^{(0,0)}$. \ As in (\ref{eq1}),
we have 
\begin{eqnarray*}
d\psi ^{(0,0)}(t) &=&d(\tau ^{(0,0)})_{1}(t)-(a^{(0,0)})^{2}\left\| \frac{1}{%
s}\right\| _{L^{1}(\xi ^{(0,0)})}d\eta ^{(0,0)}(t) \\
&=&d(\tau _{(n,0)})_{1}(t)-\frac{a^{2}z_{0}^{2}\cdots z_{n-2}^{2}}{%
y_{1}^{2}\cdots y_{n-1}^{2}}\left\| \frac{1}{s}\right\| _{L^{1}(\xi )}d\eta
_{(n,n-1)}(t) \\
&=&\frac{t}{y_{0}^{2}\cdots y_{n-1}^{2}}d\tau (t^{\frac{1}{n}})-\frac{%
a^{2}z_{0}^{2}\cdots z_{n-2}^{2}}{y_{1}^{2}\cdots y_{n-1}^{2}}\left\| \frac{1%
}{s}\right\| _{L^{1}(\xi )}\frac{t^{\frac{n-1}{n}}}{z_{0}^{2}\cdots
z_{n-2}^{2}}d\eta (t^{\frac{1}{n}}) \\
&&\text{(by Lemmas \ref{restrictionBerger} and \ref{powermeasure})} \\
&=&\frac{1}{y_{0}^{2}y_{1}^{2}\cdots y_{n-1}^{2}}\{td\tau (t^{\frac{1}{n}%
})-a^{2}y_{0}^{2}\left\| \frac{1}{s}\right\| _{L^{1}(\xi )}t^{\frac{n-1}{n}%
}d\eta (t^{\frac{1}{n}})\}.
\end{eqnarray*}%
It follows that 
\begin{eqnarray*}
y_{0}^{2}y_{1}^{2}\cdots y_{n-1}^{2}\int \frac{1}{t}d\psi ^{(0,0)}(t)
&=&\int d\tau (t^{\frac{1}{n}})-a^{2}y_{0}^{2}\left\| \frac{1}{s}\right\|
_{L^{1}(\xi )}\int t^{-\frac{1}{n}}d\eta (t^{\frac{1}{n}}) \\
&=&1-a^{2}y_{0}^{2}\left\| \frac{1}{s}\right\| _{L^{1}(\xi )}\left\| \frac{1%
}{t}\right\| _{L^{1}(\eta )}.
\end{eqnarray*}%
On the other hand, 
\begin{eqnarray*}
y_{0}^{2}\int \frac{1}{t}d\psi (t) &=&y_{0}^{2}\{\int \frac{1}{t}d\tau
_{1}(t)-a^{2}\left\| \frac{1}{s}\right\| _{L^{1}(\xi )}\int \frac{1}{t}d\eta
(t)\} \\
&=&y_{0}^{2}\int \frac{1}{t}\cdot \frac{t}{y_{0}^{2}}d\tau
(t)-a^{2}y_{0}^{2}\left\| \frac{1}{s}\right\| _{L^{1}(\xi )}\left\| \frac{1}{%
t}\right\| _{L^{1}(\eta )} \\
&=&1-a^{2}y_{0}^{2}\left\| \frac{1}{s}\right\| _{L^{1}(\xi )}\left\| \frac{1%
}{t}\right\| _{L^{1}(\eta )}.
\end{eqnarray*}%
Thus, 
\begin{equation}
y_{0}^{2}y_{1}^{2}\cdots y_{n-1}^{2}\left\| \frac{1}{t}\right\| _{L^{1}(\psi
^{(0,0)})}=y_{0}^{2}\left\| \frac{1}{t}\right\| _{L^{1}(\psi )}.  \label{eq2}
\end{equation}%
Consider now 
\begin{equation*}
\varphi ^{(0,0)}=\sigma ^{(0,0)}-y_{0}^{2}\cdots
y_{n-1}^{2}\left\| \frac{1}{t}\right\| _{L^{1}(\psi ^{(0,0)})}\delta
_{0}-(a^{(0,0)})^{2}y_{0}^{2}\cdots y_{n-1}^{2}\left\| \frac{1}{t}\right\|
_{L^{1}(\eta ^{(0,0)})}\frac{\xi ^{(0,0)}}{s}.
\end{equation*}

We know that $\sigma
^{(0,0)}=\sigma $, that $a^{(0,0)}=\frac{az_{0}\cdots z_{n-2}}{y_{1}\cdots
y_{n-1}}$ and that $\xi ^{(0,0)}=\xi $, so using (\ref{eq2}) we obtain 
\begin{equation*}
\varphi ^{(0,0)}=\sigma -y_{0}^{2}\left\| \frac{1}{t}\right\| _{L^{1}(\psi
)}\delta _{0}-a^{2}y_{0}^{2}z_{0}^{2}\cdots z_{n-2}^{2}\left\| \frac{1}{t}%
\right\| _{L^{1}(\eta ^{(0,0)})}\frac{\xi }{s}.
\end{equation*}%
Since $\varphi =\sigma -y_{0}^{2}\left\| \frac{1}{t}\right\| _{L^{1}(\psi
)}\delta _{0}-a^{2}y_{0}^{2}\left\| \frac{1}{t}\right\| _{L^{1}(\eta )}\frac{%
\xi }{s}$, it is easy to see that it suffices to prove that $z_{0}^{2}\cdots
z_{n-2}^{2}\left\| \frac{1}{t}\right\| _{L^{1}(\eta ^{(0,0)})}=\left\| \frac{%
1}{t}\right\| _{L^{1}(\eta )}$. \ We know that $\eta ^{(0,0)}=\eta
_{(n,n-1)} $, so 
\begin{eqnarray*}
z_{0}^{2}\cdots z_{n-2}^{2}\left\| \frac{1}{t}\right\| _{L^{1}(\eta
^{(0,0)})} &=&z_{0}^{2}\cdots z_{n-2}^{2}\int \frac{1}{t}d\eta _{(n,n-1)}(t)
\\
&=&z_{0}^{2}\cdots z_{n-2}^{2}\int \frac{1}{t}\frac{t^{\frac{n-1}{n}}}{%
z_{0}^{2}\cdots z_{n-2}^{2}}d\eta (t^{\frac{1}{n}}) \\
&=&\int t^{-\frac{1}{n}}d\eta (t^{\frac{1}{n}})=\left\| \frac{1}{t}\right\|
_{L^{1}(\eta )},
\end{eqnarray*}%
as desired. \qed
\end{proof}

\begin{corollary}
\label{cor11}Let $\left\langle \sigma ,\tau ,a,\xi ,\eta \right\rangle \in 
\mathcal{TC}$, and let $n\geq 2$. \ Assume that \newline
$\left\langle \sigma ,\tau ,a,\xi ,\eta \right\rangle ^{(1,n)}$ is
subnormal. \ Then $\left\langle \sigma ,\tau ,a,\xi ,\eta \right\rangle $ is
subnormal.
\end{corollary}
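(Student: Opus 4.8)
The plan is to assemble the corollary directly from Lemma \ref{LemmaStructure}, Lemma \ref{LemmaROMP}, and Proposition \ref{connection}; no new computation is required, since all the analytic work has already been carried out in the Proposition.

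First I would invoke the direct-sum decomposition. By Lemma \ref{LemmaStructure},
\[
\langle \sigma ,\tau ,a,\xi ,\eta \rangle ^{(1,n)}=\bigoplus_{q=0}^{n-1}\langle \sigma ^{(0,q)},\tau ^{(0,q)},a^{(0,q)},\xi ^{(0,q)},\eta ^{(0,q)}\rangle ,
\]
where the $q$-th summand is the restriction of $\mathbf{T}^{(1,n)}$ to the reducing subspace $\mathcal{H}_{(0,q)}^{(1,n)}$. As noted in Section \ref{Notation}, $\mathbf{T}^{(1,n)}$ is subnormal if and only if each of these reducing summands is subnormal. Hence the hypothesis that $\mathbf{T}^{(1,n)}$ is subnormal forces every summand $\langle \sigma ^{(0,q)},\tau ^{(0,q)},a^{(0,q)},\xi ^{(0,q)},\eta ^{(0,q)}\rangle$ to be subnormal.

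Next I would apply the criterion of Lemma \ref{LemmaROMP} summand by summand: subnormality of the $q$-th summand is equivalent to $\psi ^{(0,q)}\geq 0$ and $\varphi ^{(0,q)}\geq 0$. The key observation is that I only need two of these summands, and this is precisely where the standing assumption $n\geq 2$ enters, since it guarantees that both the index $q=0$ and the index $q=1$ are available. The $q=0$ summand yields $\varphi ^{(0,0)}\geq 0$, while the $q=1$ summand yields $\psi ^{(0,1)}\geq 0$.

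Finally I would transfer these two positivity facts back to $\mathbf{T}$ itself through Proposition \ref{connection}: part (ii) gives $\varphi ^{(0,0)}\geq 0\Leftrightarrow \varphi \geq 0$, hence $\varphi \geq 0$, and part (i) gives $\psi ^{(0,1)}\geq 0\Leftrightarrow \psi \geq 0$, hence $\psi \geq 0$. With both $\varphi \geq 0$ and $\psi \geq 0$ established, Lemma \ref{LemmaROMP} applied to $\mathbf{T}=\langle \sigma ,\tau ,a,\xi ,\eta \rangle$ itself shows that $\mathbf{T}$ is subnormal. There is no genuine obstacle at the level of the corollary: the substantive identities, namely $d\psi ^{(0,1)}(t)=\frac{t}{y_{1}^{2}\cdots y_{n}^{2}}\,d\psi (t^{1/n})$ and relation (\ref{eq2}), were already proved in Proposition \ref{connection}. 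The only point worth emphasizing is the \emph{selection} of summands: the single index $q=0$ controls $\varphi$ and the single index $q=1$ controls $\psi$, so these two summands together suffice to recover the full positivity pair $(\psi ,\varphi )$ for $\mathbf{T}$.
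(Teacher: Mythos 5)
Your proposal is correct and follows essentially the same route as the paper's own proof: decompose $\mathbf{T}^{(1,n)}$ via Lemma \ref{LemmaStructure}, deduce subnormality of each summand, extract $\varphi^{(0,0)}\geq 0$ and $\psi^{(0,1)}\geq 0$ from Lemma \ref{LemmaROMP}, transfer these to $\varphi\geq 0$ and $\psi\geq 0$ via Proposition \ref{connection}, and conclude with Lemma \ref{LemmaROMP} again. Your explicit remark that the hypothesis $n\geq 2$ is what makes both indices $q=0$ and $q=1$ available is a correct and welcome clarification of a point the paper leaves implicit.
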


\begin{proof}
Assume that $\left\langle \sigma ,\tau ,a,\xi ,\eta \right\rangle ^{(1,n)}$
is subnormal, and recall that the power of a $2$-variable weighted shift
splits as an orthogonal direct sum of $2$-variable weighted shifts. \
Moreover, each summand is in $\mathcal{TC}$ (because $\left\langle \sigma
,\tau ,a,\xi ,\eta \right\rangle \in \mathcal{TC}$). \ The fact that $%
\left\langle \sigma ,\tau ,a,\xi ,\eta \right\rangle ^{(1,n)}$ is subnormal
readily implies that each direct summand is subnormal, and then Lemma \ref%
{LemmaROMP} says that $\psi ^{(0,q)}\geq 0$ and $\varphi ^{(0,q)}\geq 0\;$%
(all $q\geq 0$). \ In particular, $\psi ^{(0,1)}\geq 0$ and $\varphi
^{(0,0)}\geq 0$. \ It follows from Proposition \ref{connection} that $\psi
\geq 0$ and $\varphi \geq 0$. \ Applying Lemma \ref{LemmaROMP} once again,
we see that $\left\langle \sigma ,\tau ,a,\xi ,\eta \right\rangle $ is
subnormal. \qed
\end{proof}

\begin{corollary}
\label{cor12}Let $\left\langle \sigma ,\tau ,a,\xi ,\eta \right\rangle \in 
\mathcal{TC}$, and let $m\geq 2$. \ Assume that \newline
$\left\langle \sigma ,\tau ,a,\xi ,\eta \right\rangle ^{(m,1)}$ is
subnormal. \ Then $\left\langle \sigma ,\tau ,a,\xi ,\eta \right\rangle $ is
subnormal.
\end{corollary}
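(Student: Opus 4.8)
The plan is to deduce Corollary \ref{cor12} from Corollary \ref{cor11} by exploiting the symmetry between the two variables, rather than redoing the computations of Proposition \ref{connection} for powers of the form $(m,1)$. The point is that subnormality of a pair, membership in $\mathcal{TC}$, and the passage to powers all behave predictably under the coordinate flip on $\ell^{2}(\mathbb{Z}_{+}^{2})$, so the case $(m,1)$ can be transported to the already-settled case $(1,m)$.

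First I would introduce the flip unitary $U\in\mathcal{B}(\ell^{2}(\mathbb{Z}_{+}^{2}))$ defined by $Ue_{(k_{1},k_{2})}:=e_{(k_{2},k_{1})}$, and set $\widetilde{\mathbf{T}}:=(UT_{2}U^{\ast},UT_{1}U^{\ast})$. A direct check on basis vectors shows that $UT_{2}U^{\ast}$ is a weighted shift in the $\varepsilon_{1}$-direction and $UT_{1}U^{\ast}$ is a weighted shift in the $\varepsilon_{2}$-direction, so $\widetilde{\mathbf{T}}$ is again a commuting $2$-variable weighted shift, namely the one obtained from $\mathbf{T}=(T_{1},T_{2})$ by interchanging the two directions. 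Moreover, conjugation by $U$ commutes with taking powers of each component, so $\widetilde{\mathbf{T}}^{(1,m)}=(UT_{2}U^{\ast},UT_{1}^{m}U^{\ast})=U(T_{2},T_{1}^{m})U^{\ast}$; thus $\widetilde{\mathbf{T}}^{(1,m)}$ is unitarily equivalent to $(T_{2},T_{1}^{m})$, which is merely the reordering of the tuple $\mathbf{T}^{(m,1)}=(T_{1}^{m},T_{2})$.

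Next I would record two invariance facts. First, the flip preserves $\mathcal{TC}$: the subspace $\mathcal{M}_{1}\cap\mathcal{N}_{1}$ spanned by $\{e_{(k_{1},k_{2})}:k_{1},k_{2}\geq 1\}$ is reduced by $U$, so $c(\widetilde{\mathbf{T}})$ is exactly the flip of $c(\mathbf{T})$; and if $c(\mathbf{T})\cong(I\otimes W_{\xi},W_{\eta}\otimes I)$, then applying the flip and then the unitary that swaps the two tensor factors carries this to $(I\otimes W_{\eta},W_{\xi}\otimes I)$, which is again of tensor form. Hence $\mathbf{T}\in\mathcal{TC}$ forces $\widetilde{\mathbf{T}}\in\mathcal{TC}$. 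Second, the flip preserves subnormality of pairs: unitary equivalence clearly does, and if $(N_{1},N_{2})$ is a commuting normal extension of $(S_{1},S_{2})$, then $(N_{2},N_{1})$ is one for $(S_{2},S_{1})$, so reordering the entries of a subnormal pair leaves it subnormal.

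With these facts the argument closes immediately. By hypothesis $\mathbf{T}^{(m,1)}$ is subnormal, so by the second fact and the unitary equivalence of the first paragraph, $\widetilde{\mathbf{T}}^{(1,m)}$ is subnormal. Since $\widetilde{\mathbf{T}}\in\mathcal{TC}$ and $m\geq 2$, Corollary \ref{cor11} (with its index $n$ taken to be $m$) gives that $\widetilde{\mathbf{T}}$ is subnormal; flipping back via the second fact then shows that $\mathbf{T}$ is subnormal. The only step requiring genuine care is the first invariance fact, i.e.\ checking that interchanging the two variables keeps the core in tensor form and hence keeps the pair inside $\mathcal{TC}$; once that bookkeeping is in place, everything else is a formal consequence of the symmetry together with Corollary \ref{cor11}.
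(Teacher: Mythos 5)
Your proof is correct and is essentially the argument the paper intends: the paper states Corollary \ref{cor12} without any proof, treating it as an immediate consequence of Corollary \ref{cor11} via the symmetry that interchanges the two variables (the same symmetry it invokes tacitly in the proof of Lemma \ref{LemmaStructure}, where it reduces the case $(m,n)$ to $m=1$). Your write-up simply makes the tacit bookkeeping explicit --- that the coordinate flip is implemented by a unitary, carries $\left\langle \sigma ,\tau ,a,\xi ,\eta \right\rangle \in \mathcal{TC}$ to another element of $\mathcal{TC}$ (with $\sigma,\tau$ and $\xi,\eta$ interchanged), intertwines $\mathbf{T}^{(m,1)}$ with the $(1,m)$ power of the flipped shift, and preserves subnormality of pairs under both unitary equivalence and reordering.
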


\section{Proof of the main theorem}

We are now ready to prove our main result, which we restate for the reader's
convenience.

\begin{theorem}
\label{Thm1}Let $\mathbf{T}\in \mathcal{TC}$. \ The
following statements are equivalent.\newline
(i) $\mathbf{T}\in \mathfrak{H}_{\infty }$;\newline
(ii) $\mathbf{T}^{(m,n)}$ $\in \bigoplus \mathfrak{H%
}_{\infty }$ for all $m,n\geq 1$;\newline
(iii) $\mathbf{T}^{(m,n)}\in \bigoplus \mathfrak{H}%
_{\infty }$ for some $m,n\geq 1$.
\end{theorem}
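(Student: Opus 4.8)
The plan is to prove the theorem by establishing the chain of implications $(i)\Rightarrow(ii)\Rightarrow(iii)\Rightarrow(i)$. The implications $(i)\Rightarrow(ii)$ and $(ii)\Rightarrow(iii)$ will be the easy directions, and all the substantive work has already been packaged into the corollaries of Section~\ref{pairs}. The genuinely nontrivial content is the reverse implication $(iii)\Rightarrow(i)$, which is where the machinery of Proposition~\ref{connection} and Corollaries~\ref{cor11} and \ref{cor12} is brought to bear.

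First I would dispose of $(i)\Rightarrow(ii)$. If $\mathbf{T}\in\mathfrak{H}_\infty$, then $\mathbf{T}$ has a Berger measure $\mu$, and each power $\mathbf{T}^{(m,n)}$ inherits subnormality: the normal extension of $\mathbf{T}$ raised to the appropriate powers furnishes a commuting normal extension of $\mathbf{T}^{(m,n)}$. Using the orthogonal decomposition $\ell^2(\mathbb{Z}_+^2)=\oplus_{p=0}^{m-1}\oplus_{q=0}^{n-1}\mathcal{H}_{(p,q)}^{(m,n)}$ from (\ref{hpq}), each reducing summand $\mathbf{T}^{(m,n)}|_{\mathcal{H}_{(p,q)}^{(m,n)}}$ is then subnormal, so $\mathbf{T}^{(m,n)}\in\bigoplus\mathfrak{H}_\infty$. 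The step $(ii)\Rightarrow(iii)$ is trivial: a statement holding for all $m,n\geq 1$ holds in particular for some $m,n\geq 1$.

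The heart of the matter is $(iii)\Rightarrow(i)$. Suppose $\mathbf{T}^{(m,n)}\in\bigoplus\mathfrak{H}_\infty$ for some fixed $m,n\geq 1$. By Lemma~\ref{LemmaStructure}, $\mathbf{T}^{(m,n)}$ decomposes as a direct sum of pairs in $\mathcal{TC}$, and since each reducing summand of a subnormal direct sum is itself subnormal, every summand $\langle\sigma^{(p,q)},\tau^{(p,q)},a^{(p,q)},\xi^{(p,q)},\eta^{(p,q)}\rangle$ is subnormal. The plan is to reduce the general $(m,n)$ case to the two special cases already handled. Writing $\mathbf{T}^{(m,n)}=(\mathbf{T}^{(m,1)})^{(1,n)}$, I would first argue that subnormality of $\mathbf{T}^{(m,n)}$ forces subnormality of $\mathbf{T}^{(m,1)}$: indeed $\mathbf{T}^{(m,1)}\in\bigoplus\mathcal{TC}$, and applying Corollary~\ref{cor11} to each $\mathcal{TC}$-summand of $\mathbf{T}^{(m,1)}$ (whose own $(1,n)$-power is a subcollection of the summands of $\mathbf{T}^{(m,n)}$, hence subnormal) yields subnormality of that summand, so $\mathbf{T}^{(m,1)}$ is subnormal. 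Then Corollary~\ref{cor12}, applied to each $\mathcal{TC}$-summand of $\mathbf{T}$ whose $(m,1)$-power appears in $\mathbf{T}^{(m,1)}$, yields subnormality of $\mathbf{T}$ itself. This is exactly the strategy outlined in items (ii)--(v) of the Introduction: reduce to $m=1$, then invoke the functional-positivity transfer of Proposition~\ref{connection}.

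The main obstacle I anticipate is the careful bookkeeping in the reduction $\mathbf{T}^{(m,n)}\rightsquigarrow\mathbf{T}^{(m,1)}\rightsquigarrow\mathbf{T}$, ensuring that when I peel off one variable at a time, the relevant direct summand at each stage is indeed one whose subnormality I have already secured. Corollaries~\ref{cor11} and \ref{cor12} are stated for a single element of $\mathcal{TC}$ rather than for elements of $\bigoplus\mathcal{TC}$, so the crux is to verify that the $(1,n)$-power (respectively $(m,1)$-power) of a single $\mathcal{TC}$-summand of $\mathbf{T}^{(m,1)}$ (resp. of $\mathbf{T}$) really does coincide with a sub-collection of the summands of the fully subnormal $\mathbf{T}^{(m,n)}$ (resp. $\mathbf{T}^{(m,1)}$). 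Once this summand-tracking is pinned down — using the associativity $\mathbf{T}^{(m,n)}=(\mathbf{T}^{(m,1)})^{(1,n)}$ and the explicit parameter identifications from Lemma~\ref{LemmaStructure} — the argument closes, since by Lemma~\ref{LemmaROMP} subnormality of each summand is equivalent to the positivity of its pair $(\psi^{(p,q)},\varphi^{(p,q)})$, and Proposition~\ref{connection} transfers the positivity of $\psi^{(0,1)}$ and $\varphi^{(0,0)}$ back to the positivity of $\psi$ and $\varphi$ for the original shift. Applying Lemma~\ref{LemmaROMP} one final time gives $\mathbf{T}\in\mathfrak{H}_\infty$, completing the cycle.
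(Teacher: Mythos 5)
Your proposal is correct and follows essentially the same route as the paper: the easy implications $(i)\Rightarrow(ii)\Rightarrow(iii)$, then $(iii)\Rightarrow(i)$ via the factorization $\mathbf{T}^{(m,n)}=(\mathbf{T}^{(m,1)})^{(1,n)}$, Corollary~\ref{cor11} to obtain subnormality of $\mathbf{T}^{(m,1)}$, and Corollary~\ref{cor12} to conclude $\mathbf{T}\in\mathfrak{H}_{\infty}$. In fact, your summand-tracking remark (applying Corollary~\ref{cor11} to each $\mathcal{TC}$-summand of $\mathbf{T}^{(m,1)}$, whose $(1,n)$-power is a subcollection of the summands of $\mathbf{T}^{(m,n)}$) makes explicit a point the paper's own proof leaves implicit.
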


\begin{proof}
It is clear that $(i)\Longrightarrow (ii)$ and that $(ii)\Longrightarrow
(iii)$. \ Assume that $(iii)$ holds, with $n\geq 2$. \ Since $\mathbf{T}^{(m,n)}=(\mathbf{T}^{(m,1)})^{(1,n)}$, we can use
Corollary \ref{cor11} to conclude that $\mathbf{T}^{(m,1)}$ is
subnormal. \ If we now apply Corollary \ref{cor12}, we obtain that $\mathbf{T}$ is subnormal, as desired. \qed
\end{proof}

\section{\label{Example} An application}

In our previous work (\cite{CuYo1}, \cite{CuYo2}, \cite{CuYo3}, \cite{CuYo5}, \cite{CLY1}, \cite{CLY2}, \cite{ROMP}, \cite{CLY4}, \cite{Yo1}, \cite{Yo2}), we have shown that there are many different families of commuting pairs
of subnormal operators, jointly hyponormal but not admitting commuting
normal extensions, that is, $\mathbf{T}\in \mathfrak{H%
}_{1}$ but $\mathbf{T}\notin \mathfrak{H}_{\infty }$
(all $m,n\geq 1$). \ As a simple application of Theorem \ref{Thm1}, we now
show that $\mathfrak{H}_{1}\cap \mathcal{TC}\neq \mathfrak{H}_{\infty }\cap 
\mathcal{TC}$; moreover, there exists $\mathbf{T}\in 
\mathcal{TC}$, such that $\mathbf{T}\in \mathfrak{H}%
_{1}$ but $\mathbf{T}^{(m,n)}\notin \bigoplus 
\mathfrak{H}_{\infty }$ (all $m,n\geq 1$). \ We recall that $\operatorname{shift}%
(x_{0},x_{1},\cdots )$ and $\operatorname{shift}(y_{0},y_{1},\cdots )$ are subnormal
unilateral weighted shifts with Berger measures $\sigma $ and $\tau $,
respectively. \ Consider a contractive $2$-variable weighted shift $\mathbf{T}\in \mathfrak{H}_{0}$ whose weight diagram
is given by Figure \ref{figureex}(i); that is, in the $5$-tuple $\left\langle
\sigma ,\tau ,a,\xi ,\eta \right\rangle $, we have

\vspace{6pt} 
$%
\begin{array}[t]{ll}
\ast & d\sigma (t):=(1-\kappa ^{2})d\delta _{0}(t)+\frac{\kappa ^{2}}{2}dt+\frac{%
\kappa ^{2}}{2}d\delta _{1}(t), \\ 
\ast & 
\begin{array}{c}
\tau \text{ is the Berger measure of shift }(y_{0},y_{1},\cdots ),\text{with 
}\tau _{1}\text{ the }2\text{-atomic Berger} \\ 
\multicolumn{1}{l}{
\text{measure of the Stampfli subnormal completion of }\sqrt{\omega _{0}}<\sqrt{%
\omega _{1}}<\sqrt{\omega _{2}},}
\end{array}
\\ 
\ast & a\text{ is a positive number,} \\ 
\ast & \xi :=\delta _{1},\text{ and } \\ 
\ast & \eta :=\delta _{1}.%
\end{array}%
$

\begin{example}
\label{Main 2} Let $\mathbf{T} \equiv \left\langle
\sigma ,\tau ,a,\xi ,\eta \right\rangle $ be the $2$-variable
weighted shift given by Figure \ref{figureex}(i), with $\sigma $, $\tau
_{1}$, $a$, $\xi$ and $\eta$ as above. \ Then $\mathbf{T}\in \mathfrak{H}%
_{1} $ and $\mathbf{T}^{(m,n)}\notin \mathfrak{H}%
_{\infty } $ (all $m,n\geq 1$) if and only if $s\left( \kappa \right)
<y_{0}<h\left( \kappa \right) $, where%
\begin{equation*}
\begin{tabular}{l}
$s\left( \kappa \right) :=\min \left\{ \frac{\sqrt{t_{1}}}{a}\sqrt{\rho _{1}}%
,\sqrt{\frac{(1-\kappa ^{2})}{\left\| \frac{1}{t}\right\| _{L^{1}\left( \tau
_{1}\right) }-\frac{a^{2}}{t_{1}}}},\frac{\sqrt{t_{1}}}{a}\sqrt{\frac{\kappa
^{2}}{2}},\sqrt{\frac{1}{\left\| \frac{1}{t}\right\| _{L^{1}\left( \tau
_{1}\right) }}}\right\} $%
\end{tabular}%
\end{equation*}%
and%
\begin{equation*}
\begin{tabular}{l}
$h\left( \kappa \right) :=\sqrt{\frac{x_{0}^{2}y_{1}^{2}(x_{1}^{2}-x_{0}^{2})%
}{x_{0}^{2}(x_{1}^{2}-x_{0}^{2})+(a^{2}-x_{0}^{2})^{2}}}.$%
\end{tabular}%
\end{equation*}
\end{example}

\setlength{\unitlength}{.88mm} \psset{unit=.88mm} 
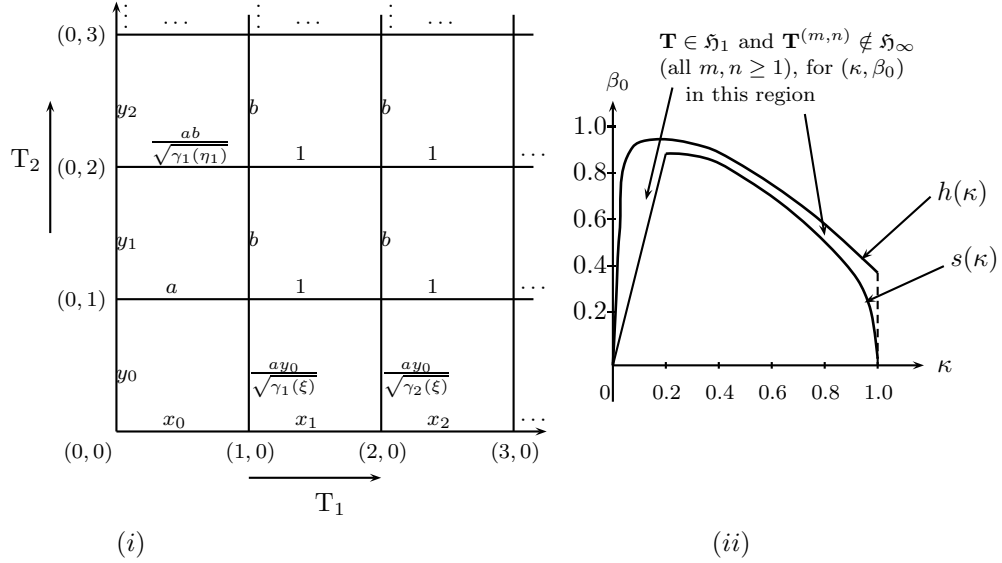
\begin{figure}[th]
\begin{center}
\begin{picture}(160,90)
\psline{->}(20,20)(85,20)
\psline(20,40)(83,40)
\psline(20,60)(83,60)
\psline(20,80)(83,80)
\psline{->}(20,20)(20,85)
\psline(40,20)(40,83)
\psline(60,20)(60,83)
\psline(80,20)(80,83)

\put(12,16){\footnotesize{$(0,0)$}}
\put(36.5,16){\footnotesize{$(1,0)$}}
\put(56.5,16){\footnotesize{$(2,0)$}}
\put(76.5,16){\footnotesize{$(3,0)$}}

\put(27,21){\footnotesize{$x_{0}$}}
\put(47,21){\footnotesize{$x_{1}$}}
\put(67,21){\footnotesize{$x_{2}$}}
\put(81,21){\footnotesize{$\cdots$}}

\put(27.5,41){\footnotesize{$a$}}
\put(47,41){\footnotesize{$1$}}
\put(67,41){\footnotesize{$1$}}
\put(81,41){\footnotesize{$\cdots$}}

\put(25,63){\footnotesize{$\frac{ab}{\sqrt{\gamma_{1}(\eta_{1})}}$}}
\put(47,61){\footnotesize{$1$}}
\put(67,61){\footnotesize{$1$}}
\put(81,61){\footnotesize{$\cdots$}}

\put(27,81){\footnotesize{$\cdots$}}
\put(47,81){\footnotesize{$\cdots$}}
\put(67,81){\footnotesize{$\cdots$}}

\psline{->}(40,13)(60,13)
\put(50,8){$\rm{T}_1$}
\psline{->}(10,50)(10,70)
\put(4,60){$\rm{T}_2$}

\put(11,39){\footnotesize{$(0,1)$}}
\put(11,59){\footnotesize{$(0,2)$}}
\put(11,79){\footnotesize{$(0,3)$}}

\put(20,28){\footnotesize{$y_{0}$}}
\put(20,48){\footnotesize{$y_{1}$}}
\put(20,68){\footnotesize{$y_{2}$}}
\put(21,81){\footnotesize{$\vdots$}}

\put(40,28){\footnotesize{$\frac{ay_{0}}{\sqrt{\gamma_{1}(\xi)}}$}}
\put(40,48){\footnotesize{$b$}}
\put(40,68){\footnotesize{$b$}}
\put(41,81){\footnotesize{$\vdots$}}

\put(60,28){\footnotesize{$\frac{ay_{0}}{\sqrt{\gamma_{2}(\xi)}}$}}
\put(60,48){\footnotesize{$b$}}
\put(60,68){\footnotesize{$b$}}
\put(61,81){\footnotesize{$\vdots$}}

\put(20,2){$(i)$}


\put(110,2){$(ii)$}

\psline{->}(95,30)(95,70)
\psline{->}(95,30)(142,30)
\put(94,38){$\_$}
\put(94,45){$\_$}
\put(94,52){$\_$}
\put(94,59){$\_$}
\put(94,66){$\_$}
\put(89,37){$0.2$}
\put(89,44){$0.4$}
\put(89,51){$0.6$}
\put(89,58){$0.8$}
\put(89,65){$1.0$}
\put(144,29){$\kappa$}
\psline{-}(95,24.5)(95,30.5)
\psline{-}(103,29.5)(103,30.5)
\psline{-}(111,29.5)(111,30.5)
\psline{-}(119,29.5)(119,30.5)

\psline{-}(127,29.5)(127,30.5)
\psline{-}(135,29.5)(135,30.5)

\put(93,25){\footnotesize{$0$}}
\put(101,25){\footnotesize{$0.2$}}
\put(109,25){\footnotesize{$0.4$}}
\put(117,25){\footnotesize{$0.6$}}
\put(125,25){\footnotesize{$0.8$}}
\put(133,25){\footnotesize{$1.0$}}
\put(94,72){\footnotesize{$\beta_{0}$}}

\put(102,78){\footnotesize{$\mathbf{T}\in\mathfrak{H}_{1}$ and
$\mathbf{T}^{(m,n)}\notin \mathfrak{H}_{\infty}$}}

\put(102,74){\footnotesize{(all $m,n\geq 1$), for $(\kappa,\beta_{0})$}}
\put(106,70){\footnotesize{in this region}}
\psline{->}(104,72)(100,55)
\psline{->}(124,68)(127,49.6)

\psline{-}(95,30)(103,62)
\pscurve[linewidth=1pt](103,62)(105,62)(110,61)(115,58.2)(120,54.8)(125,50.6)(130,45.5)
(132,42.9)(133,41)(134,38.1)(135,31)

\pscurve[linewidth=1pt](95,30)(95.5,40)(96,50)(98,63)(100,64)(109,63)(113,61.1)(125,53)(135,44)
\psline[linestyle=dashed,dash=3pt 2pt]{-}(135,30)(135,44)

\psline{->}(143,55.5)(132.5,46) \put(144,55){$h(\kappa)$}
\psline{->}(145,45)(133,39.5) \put(146,45){$s(\kappa)$}

\end{picture}
\end{center}
\caption{Weight diagram of the 2-variable weighted shift in Example \ref%
{Main 2} and graphs of $s(\protect\kappa )$ and $h(\protect\kappa )$ on the
interval $[0,1]$, respectively.}
\label{figureex}
\end{figure}

Figure \ref{figureex}(ii) specifies a region in the $(\kappa,\beta_0)$ plane where $\mathbf{T}$ is
hyponormal but none of its powers is subnormal. \ A detailed analysis of Example \ref{Main 2} and of other applications of Theorem \ref{Main 1} will be discussed elsewhere.

\medskip \textit{Acknowledgments}. \ The authors are deeply indebted to the referee for several suggestions that improved the presentation. \ The specific formulas for $s(\kappa)$ and $h(\kappa)$ in Example 8.1 were obtained using calculations with the software tool \textit{Mathematica} \cite{Wol}.

\end{document}